\documentclass[11pt]{article}

\usepackage{amstext,amssymb,amsmath,amsbsy}
\usepackage{hyperref}
\usepackage{amscd}
\usepackage{amsfonts}
\usepackage{indentfirst}
\usepackage{verbatim}
\usepackage{amsmath}
\usepackage{amsthm}
\usepackage{enumerate}
\usepackage{graphicx}
\usepackage{color}
\usepackage[OT1]{fontenc}
\usepackage[latin1]{inputenc}
\usepackage[english]{babel}
\usepackage{amssymb}
\usepackage{subfig}
\usepackage{algorithm}
\usepackage{algpseudocode}

\newcommand{\R}{\mathbb{R}}

\newcommand{\Id}{\textrm{Id}}

\newcommand{\bx}{{\bf x}}

\setcounter{MaxMatrixCols}{10}

\textwidth 16.5cm \oddsidemargin -0.5cm \evensidemargin -0.5cm
\addtolength{\textheight}{3.2cm} \addtolength{\topmargin}{-2.5cm}
\newtheorem{Theorem}{Theorem}[section]

\newtheorem{Proposition}{Proposition}[section]

\newtheorem{remark}{Remark}[section]

\newtheorem*{Assumption*}{Assumption}

\newtheorem{problem}{Problem}[section]
\newtheorem*{problem*}{Problem}
\setcounter{equation}{0}
\numberwithin{equation}{section}

\begin{document}
	
\title{A numerical method for an inverse source problem for parabolic equations and its application to a coefficient inverse problem }

\author{Phuong Mai Nguyen\thanks{Department of Mathematics and Statistics, University of North Carolina Charlotte, Charlotte, NC, 28223, USA, \text{pnguye45@uncc.edu}}
\and
Loc Hoang Nguyen\thanks{Department of Mathematics and Statistics, University of North Carolina Charlotte, Charlotte, NC, 28223, USA, \text{loc.nguyen@uncc.edu}, corresponding author}
}

\date{}
\maketitle

\begin{abstract}
	Two main aims of this paper are to  develop a numerical method to solve an inverse source problem for parabolic equations and  apply it to solve a nonlinear coefficient inverse problem. 	
	The inverse source problem in this paper is the problem to reconstruct a source term from external observations.
	Our method to solve this inverse source problem consists of two stages. We first establish an equation of the derivative of the solution to the parabolic equation with respect to the time variable.  
	Then, in the second stage, we solve this equation by the quasi-reversibility method.
	The inverse source problem considered in this paper is the linearization of a nonlinear coefficient inverse problem. 
	Hence, iteratively solving the inverse source problem provides the numerical solution to that coefficient inverse problem.
	Numerical results for the inverse source problem under consideration and the corresponding nonlinear coefficient  inverse problem are presented.
\end{abstract}

\noindent \textit{Key words.} 
parabolic equation, 
inverse source problem,
coefficient inverse problem,
numerical method,
 quasi-reversibility method

\noindent \textit{AMS Classification} 35R30, 35K20
	
\section{Introduction}	\label{sec Intro}

The area of inverse source problems has many applications and it, therefore, attracts the attention of the scientific community, see e.g., \cite{BadiaDuong:jiip2002,Devaney:itsu1983,HusseinLesnic:ejbe2014,
HusseinLesnic:jem2016,HusseinLesnic:jem2016t,
LesnicHusseinJohabsson:jcam2016, Malyshev:jmaa1989,
LocNguyen:ip2019, NguyenLiKlibanov:IPI2019}. The  solutions of inverse source problems can be used to directly detect the source even when the source is inactive after a certain time. Here, we name some examples. In the case of the parabolic equation, the problem plays an important role in identifying the pollution sources in a river or a lake \cite{BadiaDuong:jiip2002}. In the case of elliptic equations, the inverse source problem has applications in electroencephalography \cite{Anastasioelal:ip2007, Devaney:itsu1983}. In the case that the data are generated by an
acoustic source, the governing equation is the hyperbolic one and the
problem addresses ultrasonics imaging and photoacoustic tomography \cite%
{Anastasioelal:ip2007, Devaney:itsu1983}.
In this paper, we propose a numerical method to solve an inverse source problem for parabolic equations. 
This problem is the linearization of a nonlinear coefficient inverse problem. 
Therefore, we can use it  to solve a coefficient inverse problem.

Let $\Omega$ be a bounded domain in $\R^d$, $d \geq 1$, with smooth boundary $\partial \Omega$.
Let  $c$ be a function in the class $C^1(\Omega)$. 
Consider the function $u = u(\bx, t)$, $\bx \in \Omega$, $t > 0$ that is governed by the following initial value problem
\begin{equation}
	\left\{
		\begin{array}{rcll}
			u_{t}(\bx, t) &=&  \mathcal{A} u(\bx, t)+ f(\bx, t)p(\bx)  & \bx \in \Omega, t > 0, \\			
			u(\bx, t) &=& 0 &\bx \in \partial \Omega, t > 0,\\			
			u(\bx, 0) &=& 0 &\bx \in \Omega
		\end{array}
	\right.
	\label{main eqn}
\end{equation}
where $\mathcal{A}$ is an elliptic operator independent of the time and $f(\bx, t) p(\bx)$ is the source function. 
The aim of this paper is to solve the following inverse source problem.
\begin{problem}[Inverse source problem for parabolic equations]
	Let $T$ be a positive number.
	Assume the function $f(\bx, t)$, $(\bx, t) \in \Omega \times [0, T]$, is known and $f(\bx, 0)$ does not vanish at any point $\bx$ in $\Omega$.
	Determine the function $p(\bx)$, $\bx \in \Omega$, from the measurement of the following data
	\begin{equation}
		G({\bx}, t) = \partial_{\nu}u(\bx, t) 
		\label{data}
	\end{equation}
	for all $\bx \in \partial\Omega$ and $t \in [0, T].$
	\label{ip}
\end{problem}

The uniqueness of Problem \ref{ip} when the source function is a combination of some Dirac functions  is confirmed in \cite{BadiaDuong:jiip2002} and a numerical method to reconstruct this source is studied in \cite{AndrleBadia:ipse2015}.
We also draw the reader to the conditional stability in \cite{Klibanov:ip2006, LiYamamotoZou:cpaa2009}.
In the case when the governing equation is the heat equation and the source function does not depend on the second variable, a reconstruction formula is provided in \cite{Malyshev:jmaa1989}.
Another related problem is the inverse problem of reconstructing the initial condition for parabolic equation. This problem is very important and interesting, see  \cite{LiNguyen:arxiv2019, Prilepko:pam2000, TuanKhoaAu:SIAM2019, klibanovYagola:arxiv2019, Tuan:ip2017} for theoretical results and numerical methods.
In the current paper, we introduce the following approach to solve Problem \ref{ip}. 
We derive from a governing equation a new equation involving only one unknown.
The solution to that equation will directly provide the knowledge of the desired source function.
However, that equation is not a standard partial differential equation. In fact, it involves the initial condition of itself.  
We prove the stability of the inverse source problem based on the projection of this equation on a finite dimensional space.
A theory to solve this partial differential equation is not available yet. 
To solve this equation, we employ the quasi-reversibility method.
This method was first introduced by Lattès and Lions \cite{LattesLions:e1969}. 
It is used to computed numerical solutions to ill-posed problems for partial differential equations. 
Due to its strength, since then, the quasi-reversibility method attracts the great attention of the scientific community
see e.g., \cite{Becacheelal:AIMS2015, Bourgeois:ip2006,
BourgeoisDarde:ip2010, BourgeoisPonomarevDarde:ipi2019, ClasonKlibanov:sjsc2007, Dadre:ipi2016, KaltenbacherRundell:ipi2019,
KlibanovSantosa:SIAMJAM1991, Klibanov:jiipp2013, LocNguyen:ip2019}.
We refer the reader to \cite{Klibanov:anm2015} for a survey
on this method. 
The solutions of partial differential equations due to the quasi-reversibility method are called
{\it regularized solution} in the theory of ill-posed problems \cite{Tihkonov:kapg1995}. 
The convergence of the regularized solution to the true one for three main types of partial differential equations is well-known  \cite{Klibanov:anm2015}. 
Recently, in \cite{LocNguyen:ip2019}, the second author proved a Lipchitz convergence of quasi-reversibility method for the hyperbolic operator that involves Volterra integrals.
The proof for a Lipchitz convergence of the quasi-reversibility method for the parabolic operator including the initial condition when this initial condition takes some particular forms will be proved in our near future publication.

An application of the inverse source problem in this paper is to solve a coefficient inverse problem for the heat equation.
Given an initial guess of the coefficient, we show that our inverse source problem is a linear ``perturbation" of that nonlinear coefficient inverse problem near that initial guess. Hence, by repeatedly solving our inverse source problem, we can obtain the solution to the coefficient inverse problem,
see Section \ref{sec cip} for details.
It is worth mentioning that the optimal control method to solve nonlinear coefficient inverse problem is widely used \cite{Borceaetal:ip2014, CaoLesnic:nmpde2018, CaoLesnic:amm2019, KeungZou:ip1998, YangYuDeng:amm2008} which provide good numerical results with reasonable initial guesses. 
We also refer the reader to \cite{KlibanovNik:ra2017, Klibanov:ip2015} for the convexification method and numerical results in 1D.

The paper is organized as follows. 
We propose an algorithm to solve Problem \ref{ip} in Section \ref{sec inv}. 
In section \ref{sec Lip}, we study the stability of Problem \ref{ip} in an approximation context.
Next, in Section \ref{sec implement}, we present the details about the implementation of our algorithm. 
In Section \ref{sec illu}, we show some numerical solutions to the inverse source problem.
In Section \ref{sec cip}, we solve the nonlinear coefficient inverse problem from which the inverse source problem above arises.
Section \ref{sec concluding} is for concluding remarks.

\section{The inversion method}
\label{sec inv}

Define the function 
\begin{equation}
	v(\bx, t) = u_t(\bx, t) \quad \mbox{for all } \bx \in \Omega, t \in (0, T).
	\label{def v}
\end{equation}
Since $\mathcal{A}$ does not depend on $t$,
it follows from the partial differential equation in \eqref{main eqn} that
\begin{equation}
	v_t(\bx, t) = \mathcal{A}v (\bx, t) + f_t(\bx, t) p(\bx) 
	\label{3.111111}
\end{equation}
for all $\bx \in \Omega,$ $t \in (0, T).$
The initial condition for the function $v$ can be computed as
\[
	v(\bx, 0) = u_t(\bx, 0) = f(\bx, 0) p(\bx),
\]
which implies
\begin{equation}
	p(\bx) = \frac{v(\bx, 0)}{f(\bx, 0)} \quad \mbox{for all } \bx \in \Omega.
	\label{px}
\end{equation} 
Substituting this into \eqref{3.111111}, we obtain 
\begin{equation}
	v_t(\bx, t) = \mathcal{A} v(\bx, t) + \frac{f_t(\bx, t)}{f(\bx, 0)} v(\bx, 0)
	\label{3.2}
\end{equation}
for all $\bx \in \Omega$, $t \in [0, T].$
Note that equation \eqref{3.2} does not depend on the function $p(\bx)$.

Problem \ref{ip} becomes the problem of computing the function $v$ that satisfies \eqref{3.2} and the lateral Cauchy conditions
\begin{equation}
	v(\bx, t) = 0 \quad \mbox{and } \quad \partial_{\nu} v(\bx, t) = G_t(\bx, t)
	\label{3.4}
\end{equation}
for all $\bx \in \partial \Omega, t \in [0, T].$

\begin{remark}
	We consider the function $ G_t(\bx, t)$ as our ``indirect" data.
	In this paper, we test our method with noisy data
	$
		G_t(\bx, t) = G_t(\bx, t)(1 + \delta(-1 + 2{\rm rand}))
	$ 
	where $\delta$ is the noise level and ${\rm rand}$ is the uniformly distributed random number taking values in $[0, 1].$ In this paper, $\delta = 0\%, 5\%$ and $10\%.$
\end{remark}

\begin{remark}
	Our main idea when deriving \eqref{3.2} is that we want to eliminate one unknown so that we can arrive at the situation of one unknown and one equation. 
	This strategy was applied in our research group in many publications; see e.g., \cite{KlibanovNguyen:ip2019, NguyenLiKlibanov:IPI2019, LocNguyen:ip2019}.
	Among them, the most similar idea to derive \eqref{3.2} is in \cite{LocNguyen:ip2019} when the source term of a hyperbolic equation is eliminated.
	The main difference from \eqref{3.2} is that the corresponding equation in \cite{LocNguyen:ip2019} is an integro-differential equation, which is not applicable in the current paper.
\end{remark}

Assume that $v$ is known. Then, the desired function $p$ is computed via \eqref{px}. 
However, due to the presence of the term $v(\bx, 0)$, equation \eqref{3.2}, together with the lateral data in \eqref{3.4}, is not a standard partial differential equation.
A theortical method to solve it is not yet available.
We solve \eqref{3.2} and \eqref{3.4} by the quasi-reversibility method.
Define the operator
\begin{equation}
	L v(\bx, t) = v_t(\bx, t) -  \mathcal{A} v(\bx, t) - \frac{g_t(\bx, t)}{g(\bx, 0)} v(\bx, 0)
	\label{def L}
\end{equation} 
for all function $v \in C^2(\overline \Omega \times [0, T])$. 
Given $\epsilon > 0,$ we minimize the functional
\begin{equation}
	J_{\epsilon}(v) = \int_{0}^T \int_{\Omega}  |Lv(\bx, t)|^2 d\bx dt + \epsilon \|v\|_{H^{2,1}( \Omega \times [0, 1])}^2
	\label{Jepsilon}
\end{equation}
subject to the constraints in \eqref{3.4}.

The following proposition guarantees that $J_\epsilon$ has a unique minimizer in $H$.
\begin{Proposition}
	Assume that the set 
	\[
		H = \left\{v \in H^{2,1}(\Omega \times (0, T)) \mbox{ that satisfies } \eqref{3.4}\right\}
	\]
	is nonempty.
	Then, for each $\epsilon > 0,$ the function $J_{\epsilon} $ has a unique minimizer in $H$.
	\label{prop 1}
\end{Proposition}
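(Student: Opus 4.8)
The plan is to exploit that $J_\epsilon$ is a quadratic functional whose leading part is governed by a \emph{coercive} bilinear form, and then invoke the Riesz--Lax--Milgram machinery rather than the direct method. First I would observe that $H$ is a closed affine subspace of the Hilbert space $H^{2,1}(\Omega \times (0,T))$: the two conditions in \eqref{3.4} are continuous linear (trace) constraints, so their joint solution set, being assumed nonempty, is closed and affine. Fixing any $v_0 \in H$, I would write a general element of $H$ as $v = v_0 + w$, where $w$ ranges over the closed linear subspace
\[
	H_0 = \left\{ w \in H^{2,1}(\Omega \times (0,T)) : w = 0 \text{ and } \partial_{\nu} w = 0 \text{ on } \partial\Omega \times [0,T] \right\}.
\]
Minimizing $J_\epsilon$ over $H$ is then equivalent to minimizing the map $w \mapsto J_\epsilon(v_0 + w)$ over the Hilbert space $H_0$.

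The central structural fact I need is that the operator $L$ in \eqref{def L} is a \emph{bounded linear} map from $H^{2,1}(\Omega \times (0,T))$ into $L^2(\Omega \times (0,T))$. The terms $v_t$ and $\mathcal{A} v$ are controlled directly by the $H^{2,1}$-norm. The only nonstandard contribution is the trace term $\tfrac{g_t(\bx,t)}{g(\bx,0)} v(\bx,0)$: here I would invoke the trace theorem, which guarantees $\|v(\cdot,0)\|_{L^2(\Omega)} \leq C\|v\|_{H^{2,1}}$, together with the standing hypotheses that $g$ is smooth and that $g(\cdot,0)$ is bounded away from zero on $\overline\Omega$ (which for $g = f$ follows from the nonvanishing assumption in Problem~\ref{ip}). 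These make the coefficient $\tfrac{g_t}{g(\cdot,0)}$ bounded, so the whole term lies in $L^2$ with norm controlled by $\|v\|_{H^{2,1}}$. I expect this boundedness of the trace term to be the main technical point of the argument.

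With the boundedness of $L$ in hand, expanding the square gives
\[
	J_\epsilon(v_0 + w) = a(w,w) + 2\,b(w) + \text{const},
\]
where the symmetric bilinear form and the linear functional are
\[
	a(w,h) = \int_0^T \!\! \int_\Omega Lw\, Lh \, d\bx\, dt + \epsilon\, \langle w, h \rangle_{H^{2,1}}, \qquad b(w) = \int_0^T \!\! \int_\Omega Lv_0\, Lw \, d\bx\, dt + \epsilon\, \langle v_0, w \rangle_{H^{2,1}}.
\]
The form $a$ is bounded by the boundedness of $L$, and it is coercive because $a(w,w) \geq \epsilon \|w\|_{H^{2,1}}^2$; the functional $b$ is bounded on $H_0$.

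Finally I would apply the Lax--Milgram theorem (equivalently the Riesz representation theorem, since $a$ is symmetric) on $H_0$: there is a unique $w^\ast \in H_0$ with $a(w^\ast, h) = -b(h)$ for every $h \in H_0$, and this $w^\ast$ is precisely the unique minimizer of the strictly convex functional $w \mapsto a(w,w) + 2 b(w)$. Setting $v^\ast = v_0 + w^\ast$ then yields the unique minimizer of $J_\epsilon$ in $H$. Uniqueness can alternatively be read off directly from the strict convexity supplied by the $\epsilon \|\cdot\|^2$ term, since two distinct minimizers would contradict minimality at their midpoint.
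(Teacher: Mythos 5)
Your proof is correct and takes essentially the same route as the paper's: you translate by a fixed element of $H$ to reduce to the closed subspace with homogeneous lateral Cauchy data, note that $\langle Lw, Lh\rangle_{L^2} + \epsilon\langle w,h\rangle_{H^{2,1}}$ is a bounded and coercive symmetric bilinear form (an equivalent inner product) on $H^{2,1}(\Omega\times(0,T))$, and conclude by Riesz representation, which is exactly what your Lax--Milgram step amounts to. If anything, you are more explicit than the paper about the one technical point it compresses into the phrase ``trace theory,'' namely the boundedness of the nonlocal term $\frac{g_t(\bx,t)}{g(\bx,0)}v(\bx,0)$ via the trace estimate and the assumption that $f(\cdot,0)$ stays away from zero.
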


The proof of this proposition follows the proof of Proposition 3.1 in \cite{NguyenLiKlibanov:IPI2019} for the time independent case. 
We present the proof for the time dependent case here for the connivence of the reader. 
\begin{proof}[Proof of Proposition \ref{prop 1}]
	Let $\mathcal E$ be a function in $H$. 
	Denote by $H_{0}$ the space $H - \mathcal E.$
	Introduce $w = v - \mathcal E$. 
	Then, minimizing $J_{\epsilon}(v)$ for $v$ in $H$ is equivalent to minimizing $J_{\epsilon}(w + \mathcal E)$ for $w$ in $H_0.$
	If $w \in H_0$ is a minimizer of $J_{\epsilon}(w + \mathcal E)$ in $H_0$, then, by the variational principle, 
	\[
		\langle L(w + \mathcal E), L \phi\rangle_{L^2(\Omega \times [0, T])} 
		+ 
		\epsilon\langle w ,  \phi\rangle_{H^{2,1}(\Omega \times [0, T])} = 0,
	\]
	which is equivalent to
	\begin{multline}
		\langle Lw, L \phi\rangle_{L^2(\Omega \times [0, T])} 
		+ 
		\epsilon\langle w,  \phi\rangle_{H^{2,1}(\Omega \times [0, T])} 
		\\
		= -\langle L\mathcal E, L \phi\rangle_{L^2(\Omega \times [0, T])}  
		- \epsilon\langle \mathcal E,  \phi\rangle_{H^{2,1}(\Omega \times [0, T])}.
		\label{2.7}
	\end{multline}
	The left hand side of \eqref{2.7} defines a new inner product $\{\cdot, \cdot\}$ in $H^{2, 1}(\Omega \times [0, T])$.
	We have $\{w, w\} \geq \epsilon\|w\|_{H^{2, 1}(\Omega \times [0, T])}^2$ and $\{w, w\} \leq C \|w\|_{H^{2, 1}(\Omega \times [0, T])}^2$ for some constant $C$ due to the trace theory and the assumption that $\mathcal A$ is a second order elliptic operator.
	Hence, $\{\cdot, \cdot\}$ is equivalent to the standard inner product of $H^{2, 1}(\Omega \times [0, T])$. 
	On the other hand, the right hand side of \eqref{2.7} is a bounded linear operator defined on $H^{2, 1}(\Omega \times [0, T])$.
	The existence and the uniqueness of a function $w$ satisfying \eqref{2.7} follows from the Riesz representation theorem.
	\end{proof}
\begin{remark}
	The unique minimizer of $J_{\epsilon}$ is call the regularized solution to \eqref{3.2} and \eqref{3.4}.
\end{remark}

Our method to solve Problem \ref{ip} is summarized in Algorithm \ref{alg}.
In practice, we implement Algorithm \ref{alg} in the finite difference scheme. 
We present the implementation of Algorithm \ref{alg} with the finite difference method in the Section \ref{sec implement}.
\begin{algorithm}[h!]
\caption{\label{alg}The procedure to solve Problem \ref{ip}}
	\begin{algorithmic}[1]
	\State\, Compute the Neuman data $G_t(\bx, t)$  for all $\bx \in \partial \Omega,$ $t \in [0, T].$
	\State\, Solve \eqref{3.2} and \eqref{3.4} by the quasi-reversibility method; i.e., minimizing $J_{\epsilon}$, $0 < \epsilon \ll 1$, subject to the constraints in \eqref{3.4}.
	The obtained minimizer is denoted by the function $v(\bx, t)$, $(\bx, t) \in \Omega \times [0, T]$.
	\State\, The desired source function $p(\bx)$ is computed by $\frac{v(\bx, 0)}{g(\bx, 0)}$, see \eqref{px}.
	\end{algorithmic}
\end{algorithm}

\section{A Lipschitz estimate based on a truncation of the Fourier series} \label{sec Lip}

Let $\{\Psi_n\}_{n = 1}^{\infty}$ be an orthonormal basis of $L^2(0, T).$ 
For each $\bx \in \Omega$, we can write 
\begin{equation}
	v(\bx, t) = \sum_{n = 1}^{\infty}v_n(\bx) \Psi_n(t) \quad \mbox{for all } (\bx, t) \in \Omega \times [0, T]
	\label{series}
\end{equation}
where $v(\bx, t)$ is the function defined in \eqref{def v}.
Here,
\begin{equation}
	v_n(\bx) = \int_0^T v(\bx, t) \Psi_n(\bx, t) dt \quad \mbox{for all } \bx \in \Omega.
	\label{def vn}
\end{equation}
Approximate the series in \eqref{series} by
\begin{equation}
	v(\bx, t) = \sum_{n = 1}^N v_n(\bx) \Psi_n(t)
	\label{v trunc}
\end{equation}
for  $(\bx, t) \in \Omega \times [0, T]$ for some number $N > 0$.
We also write
\begin{equation}
	v_t(\bx, t) = \sum_{n = 1}^N v_n(\bx) \Psi_n'(t)
	\label{vt trunc}
\end{equation}
Plugging \eqref{v trunc} and \eqref{vt trunc} into \eqref{3.2}, we have
\[
	\sum_{n = 1}^N v_n(\bx) \Psi_n'(t) = \sum_{n = 1}^N \mathcal A v_n(\bx) \Psi_n(t) + \frac{f_t(\bx, t)}{f(\bx, 0)} \sum_{n = 1}^N v_n(\bx) \Psi_n(0).
\]
Multiply both side of the equation above by $\Psi_m(t)$ for each $m \in \{1, \dots, N\}$ and then integrate the resulting equation on $[0, T]$. 
We obtain
\begin{multline}
	\sum_{n = 1}^N v_n(\bx) \int_{0}^T \Psi_m(t)\Psi_n'(t) dt = \sum_{n = 1}^N \mathcal A v_n(\bx)\int_0^T \Psi_m \Psi_n(t)dt
	\\
	 + \sum_{n = 1}^Nv_n(\bx) \Psi_n(0) \int_0^T \frac{f_t(\bx, t)}{f(\bx, 0)} \Psi_m(t) dt
	 \label{eqn for vn}
\end{multline}
for all $\bx \in \Omega.$
Define $V(\bx) = (v_1(\bx), \dots, v_N(\bx))$. 
It follows by \eqref{eqn for vn} and the fact that $\Psi_m$ that the vector valued function $V(\bx)$ satisfies the system 
\begin{equation}
	\mathcal A V = S V
\end{equation}
where $S$ is a $d \times d$ matrix valued function given by
\[
	S = \Big(\int_0^T (\Psi_m(t) \Psi'_n(t) - \frac{f_t(\bx, t)}{f(\bx, 0)}\Psi_n(0)\Psi_m(t)dt \Big)_{m,n = 1}^{\infty}.
\]
Since $f$ is a smooth function, so is $S$. By a standard compact argument for elliptic equation, we can find a constant $C$ depending only on $\mathcal A$, $N$ and $\Omega$ such that
\begin{equation}
	\|V\|_{H^1(\Omega)^N} \leq C [\|V\|_{H^{1/2}(\partial \Omega)^N} + \|\partial_\nu V\|_{H^{-1/2}(\partial \Omega)^N}].
	\label{est V}
\end{equation}
It follows from \eqref{main eqn}, \eqref{data}, \eqref{def v} and \eqref{def vn} that
\begin{align*}
	V(\bx) &= 0, \\
	\\ 
	\partial_{\nu} V(\bx) &= \Big(\int_0^T G_t(\bx, t) \Psi_n(\bx, t) dt\Big)_{n = 1}^N \\
	&= \Big(G(\bx, T) - G(\bx, 0) - \int_0^T G(\bx, t) \Psi_n(\bx, t) dt\Big)_{n = 1}^N
\end{align*}
on $\partial \Omega$.
Hence, by \eqref{est V},
\begin{equation}
	\|V\|_{H^1(\Omega)^N} 
	\leq C [\||G(\cdot, T)| + |G(\cdot, 0)|\|_{H^{-1/2}(\partial \Omega)} + \|G\|_{H^{-1/2, 1}(\partial \Omega \times [0, T])}].
	\label{est V1}
\end{equation}
Using \eqref{v trunc}, we have
\begin{equation}
	\|v\|_{H^{2, 1}(\Omega)} C [\||G(\cdot, T)| + |G(\cdot, 0)|\|_{H^{-1/2}(\partial \Omega)} + \|G\|_{H^{-1/2, 1}(\partial \Omega \times [0, T])}].
\end{equation}
As a result, using \eqref{px} and the trace theory, we get
\[
	\|p\|_{L^2(\Omega)} \leq C [\||G(\cdot, T)| + |G(\cdot, 0)|\|_{H^{-1/2}(\partial \Omega)} + \|G\|_{H^{-1/2, 1}(\partial \Omega \times [0, T])}].
\]

In summary, we have proved the following theorem.
\begin{Theorem}
	Assume that the function $v(\bx, t) = u_t(\bx, t)$ is well-approximated by the Fourier sum as in \eqref{v trunc} for some integer $N$ where $u(\bx, t)$ is the solution to \eqref{main eqn}. 
	Then, there exists a constant $C$ depending only on $\mathcal A$, $N$ and $\Omega$ such that
	\[
		\|p\|_{L^2(\Omega)} \leq C [\||G(\cdot, T)| + |G(\cdot, 0)|\|_{H^{-1/2}(\partial \Omega)} + \|G\|_{H^{-1/2, 1}(\partial \Omega \times [0, T])}].
	\]
	\label{main thm}
\end{Theorem}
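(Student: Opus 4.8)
The plan is to reduce the desired $L^2$ stability estimate for the source $p$ to a coercive a priori estimate for a purely spatial elliptic system, by decomposing the time dependence along the orthonormal basis $\{\Psi_n\}$ of $L^2(0,T)$. The starting point is the observation already exploited in \eqref{3.2}: the function $v=u_t$ satisfies a parabolic equation in which the source $p$ has been eliminated, at the price of a nonlocal coupling to the initial trace $v(\bx,0)$. Since $p$ is recovered from that very same trace through \eqref{px}, and $f(\cdot,0)$ is bounded below by hypothesis, it suffices to control $v(\cdot,0)$ in $L^2(\Omega)$, and for that it suffices to control $v$ in $H^{2,1}(\Omega\times[0,T])$.

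First I would expand $v$ as in \eqref{series}, truncate to the first $N$ modes as in \eqref{v trunc} (this is exactly where the standing assumption of good $N$-mode approximation is used), and project \eqref{3.2} onto each $\Psi_m$ by multiplying and integrating over $[0,T]$. The key point is that the nonlocal term $\tfrac{f_t}{f(\cdot,0)}\,v(\bx,0)$ causes no obstruction: after integration it becomes a smooth, bounded, zeroth-order factor built from $\Psi_n(0)$ and $\int_0^T \tfrac{f_t}{f(\cdot,0)}\Psi_m\,dt$. Collecting the coefficient vector $V=(v_1,\dots,v_N)$, the projected relations \eqref{eqn for vn} assemble into $\mathcal{A}V=SV$ with $S$ a smooth matrix-valued function, so $V$ solves a genuinely elliptic system with zero forcing and a lower-order perturbation controlled by the smoothness of $f$.

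Next I would invoke the elliptic estimate \eqref{est V}, bounding $\|V\|_{H^1(\Omega)^N}$ by the Dirichlet and Neumann traces, and translate the lateral conditions \eqref{3.4} into this setting. The Dirichlet trace is $V=0$ because $v$ vanishes on the lateral boundary, while the Neumann trace is the moment vector with entries $\int_0^T G_t(\bx,t)\Psi_n\,dt$. Integrating by parts in $t$ rewrites each entry as $G(\cdot,T)\Psi_n(T)-G(\cdot,0)\Psi_n(0)-\int_0^T G\,\Psi_n'\,dt$, which is precisely what generates the endpoint contribution $\||G(\cdot,T)|+|G(\cdot,0)|\|_{H^{-1/2}(\partial\Omega)}$ together with the space-time term $\|G\|_{H^{-1/2,1}(\partial\Omega\times[0,T])}$, yielding \eqref{est V1}. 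Reassembling $v$ from $V$ via \eqref{v trunc}, bounding $\|v\|_{H^{2,1}}$ by $\|V\|_{H^1(\Omega)^N}$ (the $N$-dependence of $C$ entering through the norms of the finitely many $\Psi_n$), and finally applying \eqref{px} with the trace theorem and the positivity of $f(\cdot,0)$ closes the argument.

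The step I expect to be the main obstacle is the estimate \eqref{est V} itself. Prescribing both the Dirichlet and the Neumann trace over-determines the system $\mathcal{A}V=SV$, so this is in effect a Cauchy problem for an elliptic operator, which is classically ill-posed; a Lipschitz bound of the stated shape is therefore delicate and rests genuinely on the ``compact/Fredholm'' argument (and on the finite truncation) rather than on any soft trace inequality. The remaining care is bookkeeping: one must check that the constant $C$ depends only on $\mathcal{A}$, $N$, and $\Omega$, which requires that the $\bx$-dependent perturbation $S$ be controlled uniformly — guaranteed here because $f$ is smooth and $f(\cdot,0)$ does not vanish — while the projection, the integration by parts, and the trace estimates are all routine.
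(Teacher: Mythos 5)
Your proposal follows the paper's own proof essentially step for step: the same truncation of $v$ along the basis $\{\Psi_n\}$, the same projection of \eqref{3.2} onto each $\Psi_m$ yielding the elliptic system $\mathcal{A}V = SV$, the same Cauchy-data estimate \eqref{est V} obtained by the compactness--uniqueness argument (which the paper dispatches with the phrase ``standard compact argument''), the same integration by parts in $t$ to convert the Neumann moments of $G_t$ into the terms $G(\cdot,T)$, $G(\cdot,0)$, and $\|G\|_{H^{-1/2,1}}$, and the same final passage through \eqref{px} and the trace theorem. Your version of the integration by parts, with the factors $\Psi_n(T)$ and $\Psi_n(0)$ and with $\Psi_n'$ in the integral, is in fact the corrected form of the paper's display, which drops these factors, and your remark that \eqref{est V} is where the argument genuinely leans on the finite truncation and full-boundary Cauchy data accurately identifies the crux.
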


Theorem \ref{main thm} implies the Lipschitz stability for Problem \ref{ip} in the finite dimensional space spanned by $\{\Psi_1, \dots, \Psi_N\}$.
	Studying the stability when $N$ tends to $\infty$ is extremely challenging and is out of the scope of this paper.

\begin{remark}
	The assumption about the well-approximation in Theorem \ref{main thm} is verified numerically in some recent works of our research group. 
	This verification for elliptic equation can be found in \cite{NguyenLiKlibanov:IPI2019} and the one for parabolic equation is in \cite{LiNguyen:arxiv2019}.
	In those papers, the basis $\{\Psi_n\}_{n = 1}^{\infty}$ is taken from \cite{Klibanov:jiip2017}.
\end{remark}

\section{The finite difference method to find the regularized solution} \label{sec implement}

In this section, the domain $\Omega$ is set to be a square in $
\R^2$; i.e,
\[
	\Omega = (-R, R)^2 
\]
where $R$ is a positive number.
Let $N_\bx$ and $N_t$ be positive integers. Set $d_\bx = 2R/N_\bx$ and $d_t = T/Nt.$
We define a set of grid points on $\overline \Omega$
\[
	\mathcal{G} = \left\{
		(x_i, y_j) = (-R + (i - 1)d_\bx, -R + (j - 1)d_\bx): 1 \leq i, j \leq N+1
	\right\}
\]
and define a uniform partition on the time domain $[0, T]$ as
\[
	0 = t_1  < t_2 < \dots < t_{N_t + 1}, \quad t_l = (l - 1)d_t, 1 \leq l \leq N_t + 1.
\] 
For the simplicity in implementation, in this section, we modify the $H^{2,1}$ norm in the regularization term in \eqref{Jepsilon} to the $H^1$ norm.
In other words,
\begin{equation*}
	J_{\epsilon}(v) = \int_{\Omega} \int_0^T |Lv|^2 d\bx + \epsilon \int_0^T \int_\Omega \left(|v|^2 + |\nabla v|^2\right)d\bx 
\end{equation*}
for all $v \in H^{2,1}(\Omega \times [0, T]).$
\begin{remark}
We replace the norm in regularization term $\epsilon \|v\|_{H^{2, 1}(\Omega \times T)}^2$ by the $H^1$--norm 
because the $H^1$--norm is easier to implement. 
On the other hand, we have not observed any instabilities probably because the number $100 \times 100$ of grid points we use is not too large and all norms in finite dimensional spaces are equivalent. 
\end{remark}
\begin{remark}[The choice of $\epsilon$]
	We observe numerically that if $\epsilon$ is larger than $10^{-5}$, the reconstructed images of the source function are good but the reconstructed values are low and if $\epsilon < 10^{-9}$, our method breaks down. We choose $\epsilon = 10^{-8}$ in all our numerical tests. 
	Note that this choice of $\epsilon$ is independent of the noise level, which is, in practice, supposed to be unknown.
\end{remark}
The finite difference version of $J_{\epsilon},$ still named as $J_{\epsilon}$, reads
\begin{multline}
	J_{\epsilon}(v) = d_t d_\bx^2\sum_{l = 2}^{N_t + 1} \sum_{i,j = 2}^{N_\bx} |L^{d_\bx, d_t} v(x_i, y_j, t_l)|^2 
	\\
	+ \epsilon d_t d_\bx^2\sum_{l = 2}^{N_t + 1} \sum_{i,j = 2}^{N_\bx }\left( |v(x_i, y_j, t_l)|^2 + |\nabla^{d_\bx} v(x_i, y_j, t_l)|^2\right).
	\label{Jepsilon FD}
\end{multline}
Here, $L^{d_\bx, d_t}$ is the  approximation of $L$ in the finite difference scheme and $\nabla^{d_\bx}$ is the finite difference gradient. 
From now on, for the simplicity and to minimize the effort of writing computational code, we consider the case 
\[
	\mathcal{A} v(\bx, t) = \Delta v(\bx, t) + c(\bx) v(\bx, t) 
\]
for some function $c$ in $L^{\infty}(\Omega).$
In this case,
\begin{multline}
	L^{d_\bx, d_t} v(x_i, y_j, t_l) = \frac{v(x_i, y_j, t_l) - v(x_i, y_j, t_{l - 1})}{d_t} 
	\\
	 - \frac{v(x_{i+1}, y_j, t_l) + v(x_{i - 1}, y_j, t_l) +v(x_i, y_{j + 1}, t_l) + v(x_i, y_{j-1}, t_l) - 4v(x_i, y_j, t_l)}{d_\bx^2}
	 \\
	 - c(x_i, y_j)v(x_i, y_j, t_l) - \frac{f_t(x_i, y_j, t_l)}{f(x_i, y_j, t_1)} v(x_i, y_j, t_1)
	 \label{3.1}
\end{multline}
and
\[
	\nabla^{d_\bx} u(x_i, y_j, t_l) = \left(\frac{u(x_{i + 1}, y_j, t_k) - u(x_{i}, y_j, t_k)}{d_{\bx}}, \frac{u(x_{i}, y_{j + 1}, t_k) - u(x_{i}, y_j, t_k)}{d_{\bx}}\right)
\]
for all $1 \leq i, j \leq N_{\bx}$ and $1 \leq k \leq N_t + 1.$
Introduce the $N$ dimensional vector $\mathfrak{v}$, $N = (N_{\bx} + 1)^2 (N_t + 1)$, whose $n^{\rm th}$ entry is given by
\begin{equation}
	\mathfrak{v}_n = v(x_i, y_j, t_l) 
	\label{lineup}
\end{equation}
where 
\[
	n = (i-1)(N_{\bx} + 1)(N_t + 1) + (j - 1)(N_t + 1) + l, \quad 1 \leq i, j \leq N_{\bx} + 1, 1 \leq l \leq N_t+1.
\]
Then, we can rewrite \eqref{3.1} as
\begin{equation}
	L^{d_\bx, d_t} v = D \mathfrak{v}
	\label{matrix D}
\end{equation}
where the $N \times N$ matrix $D$ is described as follows. 
For each $n = (i-1)(N_{\bx} + 1)(N_t + 1) + (j - 1)(N_t + 1) + l$  with $2 \leq i, j \leq N_{\bx}$ and $2 \leq l \leq N_t$,
\begin{enumerate}
	\item the $nn^{\rm th}$ entry $D_{nn}$ is given by $\frac{1}{d_t} + \frac{4}{d_{\bx}^2} - c(x_i, y_j)$;
	\item the $n m^{\rm th}$ entry $D_{n m}$ is given by $-\frac{1}{d_t}$ where $m = (i-1)(N_{\bx} + 1)(N_t + 1) + (j - 1)(N_t + 1) + l - 1$ for $3 \leq l \leq N_t$;
	\item the $n m^{\rm th}$ entry $D_{n m}$ is given by $-\frac{1}{d_t} - \frac{f_t(x_i, y_j, t_l)}{f(x_i, y_j, t_1)}$ where $m = (i-1)(N_{\bx} + 1)(N_t + 1) + (j - 1)(N_t + 1) + l - 1$ for $ l = 2$;
	\item the $n m^{\rm th}$ entry $D_{n m}$ is given by $-\frac{1}{d_{\bx}^2}$ where $m = (i \pm 1 -1)(N_{\bx} + 1)(N_t + 1) + (j \pm 1 - 1)(N_t + 1) + l - 1$ for $2 \leq l \leq N_t$;
	\item the other entries are $0$.
\end{enumerate}
We next define the matrices $D_x$ and $D_y$ such that $(D_x \mathfrak{v}, D_y \mathfrak{v}) = \nabla^{d_\bx} v$. 
For each $n = (i-1)(N_{\bx} + 1)(N_t + 1) + (j - 1)(N_t + 1) + l$  with $2 \leq i, j \leq N_{\bx} + 1$ and $1 \leq l \leq N_t + 1$,
\begin{enumerate}
	\item the $nn^{\rm th}$ entry of $D_x$ and $D_y$ is given by $\frac{1}{d_\bx}$;
	\item the $nm^{\rm th}$ entry of $D_x$ is given by $-\frac{1}{d_\bx}$ for $m = (i - 1 -1)(N_{\bx} + 1)(N_t + 1) + (j - 1)(N_t + 1) + l$;
	\item the $nm^{\rm th}$ entry of $D_y$ is given by $-\frac{1}{d_\bx}$ for $m = (i  -1)(N_{\bx} + 1)(N_t + 1) + (j - 1 - 1)(N_t + 1) + l$;
	\item other entries are $0.$
\end{enumerate}
The finite difference version of $J_{\epsilon}$, defined in \eqref{Jepsilon FD}, becomes
\[
	J_{\epsilon} v = d_td_{\bx}^2\left[|D \mathfrak{v}|^2 + \epsilon \left( |v^2| + |D_x \mathfrak {v}|^2 + |D_y \mathfrak {v}|^2 \right) \right].
\]
Hence, due to \eqref{lineup}, since $v$ is a minimizer of $J_{\epsilon}$, $\mathfrak{v}$ satisfies the equation
\begin{equation}
	\left[D^T D + \epsilon \left(\Id + D_x^TD_x + D_y^TD_y\right)\right] \mathfrak{v} = \vec{\bf 0}.
	\label{minimization}
\end{equation}

We next consider the boundary conditions for $v$ in \eqref{3.4}. 
In the finite difference scheme, the first condition in \eqref{3.4} reads for $l = 1, 2, \dots, N_t+1,$
\[
	v(x_i, y_j, t_l) = 0 
\]
for all $i \in \{1, N_{\bx}+1\}$ and  $j \in \{1, 2, \dots, N_{\bx} + 1\}$ or $i \in \{1, \dots, N_{\bx}+1\}$ and  $j \in \{1, N_{\bx} + 1\}$.
Therefore, due to \eqref{lineup}, we can write this condition as 
\begin{equation}
	K_1 \mathfrak{v} = \vec{\bf 0}
	\label{matrix K1}
\end{equation}
where $K_1$ is defined as follows. 
For $l \in \{1, 2, \dots, N_t + 1\},$
\begin{enumerate}
	\item the $nn^{\rm th}$ entry of $K_1$ is set to be $1$ if $n = (i-1)(N_{\bx} + 1)(N_t + 1) + (j - 1)(N_t + 1) + l$ for some $i \in \{1, N_\bx +1\}$, $j \in \{1, 2, \dots, N_{\bx}+1\}$ or $i \in \{1, 2, \dots, N_\bx +1\}$, $j \in \{1, N_{\bx}+1\}$;
	\item the other entries of $K_1$ are $0$.
\end{enumerate}
The second condition in \eqref{3.4} is rewritten as
\begin{equation}
	K_2 \mathfrak{v} = \mathfrak{g}
	\label{matrix K2}
\end{equation}
where the vector $\mathfrak{g}$ is the lineup version of the data $G_t$
\[
	\mathfrak{g}_n = G_t(x_i, y_j, t_l) \quad n = (i-1)(N_{\bx} + 1)(N_t + 1) + (j - 1)(N_t + 1) + l
\]
for all $i \in \{1, N_{\bx}+1\}$ and  $j \in \{1, 2, \dots, N_{\bx} + 1\}$ or $i \in \{1, \dots, N_{\bx}+1\}$ and  $j \in \{1, N_{\bx} + 1\}$
and the matrix $K_2$ is defined as follows. For all $l \in \{1, 2, \dots, N_t+1\}$, 
\begin{enumerate}
	\item the $nn^{\rm th}$ entry of $K_2$ is $\frac{1}{d_\bx}$ if $n = (i-1)(N_{\bx} + 1)(N_t + 1) + (j - 1)(N_t + 1) + l$ for $i \in \{1, N_\bx +1\}$, $j \in \{1, 2, \dots, N_{\bx}+1\}$ or $i \in \{1, 2, \dots, N_\bx +1\}$, $j \in \{1,  N_{\bx}+1\}$;
	\item the $nm^{\rm th}$ entry of $K_2$ is $-\frac{1}{d_\bx}$ if $n = (i-1)(N_{\bx} + 1)(N_t + 1) + (j - 1)(N_t + 1) + l$ for $i = 1$, $j \in \{1, 2, \dots, N_{\bx}+1\}$ and $m = (i + 1-1)(N_{\bx} + 1)(N_t + 1) + (j - 1)(N_t + 1) + l$;
	\item the $nm^{\rm th}$ entry of $K_2$ is $-\frac{1}{d_\bx}$ if $n = (i-1)(N_{\bx} + 1)(N_t + 1) + (j - 1)(N_t + 1) + l$ for $i = N_\bx + 1$, $j \in \{1, 2, \dots, N_{\bx}+1\}$ and $m = (i - 1-1)(N_{\bx} + 1)(N_t + 1) + (j - 1)(N_t + 1) + l$;
	\item the $nm^{\rm th}$ entry of $K_2$ is $-\frac{1}{d_\bx}$ if $n = (i-1)(N_{\bx} + 1)(N_t + 1) + (j - 1)(N_t + 1) + l$ for $i \in \{2, \dots, N_{\bx}\}$, $j  = 1$ and $m = (i -1)(N_{\bx} + 1)(N_t + 1) + (j + 1 - 1)(N_t + 1) + l$;
	\item the $nm^{\rm th}$ entry of $K_2$ is $-\frac{1}{d_\bx}$ if $n = (i-1)(N_{\bx} + 1)(N_t + 1) + (j - 1)(N_t + 1) + l$ for $i \in \{ 2, \dots, N_{\bx}\}$, $j  = N_\bx+1$ and $m = (i -1)(N_{\bx} + 1)(N_t + 1) + (j - 1 - 1)(N_t + 1) + l$;
	\item the other entries of $K_2$ are 0.
\end{enumerate}
Combining \eqref{minimization}, \eqref{matrix K1} and \eqref{matrix K2}, we obtain
\[
	\left[
		\begin{array}{c}
			D^T D + \epsilon \left(\Id + D_x^TD_x + D_y^TD_y\right)
			\\
			K_1\\
			K_2 
		\end{array}
	\right]\mathfrak{v} = 
	\left[
		\begin{array}{c}
			\vec{\bf 0}\\
			\vec{\bf 0}\\
			\mathfrak{g}
		\end{array}
	\right]
\]
Since $\epsilon$ is a small number, it is acceptable that we modify the equation above by a more ``stable" one
\begin{multline}
	\left(
	\left[
		\begin{array}{c}
			 D 
			\\
			K_1\\
			K_2 
		\end{array}
	\right]^T 
	\left[
		\begin{array}{c}
			 D 
			\\
			K_1\\
			K_2 
		\end{array}
	\right] + \epsilon \left(\Id + D_x^TD_x + D_y^TD_y\right) \right)\mathfrak{v} 
	\\
	= 
	\left[
		\begin{array}{c}
			 D 
			\\
			K_1\\
			K_2 
		\end{array}
	\right]^T
	\left[
		\begin{array}{c}
			\vec{\bf 0}\\
			\vec{\bf 0}\\
			\mathfrak{g}
		\end{array}
	\right].
	\label{eqn for lineup v}
\end{multline}

The analysis in this section is summarized in the following proposition.
\begin{Proposition} The source function $p(\bx)$ can be computed by 
\begin{enumerate}
	\item solve \eqref{eqn for lineup v} for $\mathfrak{v}$,  the ``line up" version of $v$;
	\item compute the function $v_{\rm comp}$ using \eqref{lineup};
	\item calculate $p_{\rm comp}(\bx) = \frac{v_{\rm comp}(\bx)}{f(\bx, 0)}$.
\end{enumerate}
\end{Proposition}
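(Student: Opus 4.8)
The plan is to read this proposition as a verification that the three displayed steps are well defined and that, taken together, they return the discrete source function; there is no deep estimate to prove here, only a check that the linear system in step~1 is uniquely solvable and that steps~2 and~3 are the correct post-processing. I would therefore organize the argument around (i)~characterizing the constrained minimizer of the discrete $J_\epsilon$ by a linear system, (ii)~justifying the stabilized form \eqref{eqn for lineup v}, and (iii)~confirming that the indexing map and the division recover $p$.

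First I would establish that the finite-difference functional $J_\epsilon$ is a positive-definite quadratic form in $\mathfrak{v}$, so that setting its gradient to zero gives the necessary and sufficient optimality condition \eqref{minimization}. Adjoining the Dirichlet condition \eqref{matrix K1} and the Neumann condition \eqref{matrix K2} produces the stacked linear system. I would then spell out the passage to \eqref{eqn for lineup v}: writing $M = [\,D;\,K_1;\,K_2\,]$ for the stacked matrix and $b = [\,\vec{\bf 0};\,\vec{\bf 0};\,\mathfrak{g}\,]$ for the stacked right-hand side, equation \eqref{eqn for lineup v} is exactly the normal equation $(M^TM + \epsilon(\Id + D_x^TD_x + D_y^TD_y))\,\mathfrak{v} = M^Tb$ associated with the penalized least-squares problem in which the lateral Cauchy data are enforced as extra rows of $M$ rather than as hard constraints.

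Next I would verify solvability: $M^TM$ is symmetric positive semidefinite and $\epsilon\,\Id$ is strictly positive definite, hence the system matrix of \eqref{eqn for lineup v} is symmetric positive definite and invertible, so step~1 yields a unique $\mathfrak{v}$. Step~2 is then the inverse of the indexing bijection \eqref{lineup}, reading the grid values of $v_{\rm comp}$ off the entries of $\mathfrak{v}$; in particular the slice $l=1$ recovers the initial-time values $v_{\rm comp}(\bx, 0)$. Step~3 is a direct application of \eqref{px}, namely dividing this initial slice by $f(\bx, 0)$, which is legitimate since $f(\bx, 0)$ does not vanish on $\Omega$ by the hypothesis of Problem \ref{ip}.

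The hard part, or rather the only point that is not purely mechanical, is clarifying in what sense \eqref{eqn for lineup v} returns the regularized solution of the original constrained problem. Replacing the hard constraints $K_1\mathfrak{v}=\vec{\bf 0}$ and $K_2\mathfrak{v}=\mathfrak{g}$ by least-squares penalties perturbs the constrained minimizer of $J_\epsilon$, and a fully rigorous statement would require controlling this perturbation, for instance relative to the regularization weight $\epsilon$ as the grid is refined. Since the text adopts \eqref{eqn for lineup v} as a deliberately stabilized surrogate rather than an exact reformulation, I would flag this as the soft step and rely on the positive-definiteness above to guarantee that the procedure is at least uniquely executable and consistent with the continuous identities \eqref{px} and \eqref{3.4}.
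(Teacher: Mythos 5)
Your proposal is correct and follows essentially the same route as the paper, which in fact offers no separate proof: the proposition is announced as a summary of the Section~\ref{sec implement} derivation, where the discrete quadratic $J_\epsilon$ is minimized to obtain \eqref{minimization}, the lateral Cauchy conditions are appended as \eqref{matrix K1} and \eqref{matrix K2}, and the stacked system is then replaced by \eqref{eqn for lineup v} with only the remark that, since $\epsilon$ is small, the modification is ``acceptable.'' Where you go beyond the text, the additions are accurate and worthwhile. You identify \eqref{eqn for lineup v} not as an ad hoc stabilization but as the exact normal equation $\left(M^TM+\epsilon\left(\Id+D_x^TD_x+D_y^TD_y\right)\right)\mathfrak{v}=M^Tb$ of a penalized least-squares problem with $M=[D;K_1;K_2]$, giving the ``stable'' system a variational meaning the paper leaves implicit; you supply the unique-solvability argument (symmetric positive definiteness via $M^TM\succeq 0$ and $\epsilon\,\Id\succ 0$), which the paper never states but which is needed for step~1 of the proposition to be well defined; and you correctly flag the one genuine soft spot, namely that trading the hard constraints $K_1\mathfrak{v}=\vec{\bf 0}$, $K_2\mathfrak{v}=\mathfrak{g}$ for least-squares rows perturbs the constrained minimizer of Proposition~\ref{prop 1}'s discrete analogue, a discrepancy the paper acknowledges only informally and does not quantify. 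Your treatment of steps~2 and~3 --- inverting the indexing bijection \eqref{lineup} and dividing the $l=1$ slice by $f(\bx,0)$, which is nonvanishing by the hypothesis of Problem~\ref{ip} --- matches \eqref{px} exactly (note the paper's $g(\bx,0)$ in Algorithm~\ref{alg} and \eqref{def L} is a typo for $f(\bx,0)$, and your reading resolves it the right way).
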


\section{Numerical results}\label{sec illu}

We test our numerical method when $R = 1$ and $\Omega,$ therefore, is $(-1, 1)^2.$
Also, we choose $T = 0.2$, see the Remark \ref{rem T} for this choice of $T$.
\begin{remark}[Choose $T$]
	We numerically choose $T$ by examining the $L^2$ norm of the data $G_t(\bx, t)$ on $\partial \Omega$ as a function in $T$. 
	Define \[\gamma (t) = \|G_t(\cdot, t)\|_{L^2(\partial \Omega)}.\] 
	The graph of the function $\gamma$ is displayed in Figure \ref{fig T}, showing that the data  is largest on $[0, 0.2]$. This means the data contains most important information about the source in this interval. 
	We therefore choose $T = 0.2$ for all of our numerical tests.
\begin{figure}[h!]
		\begin{center}
			\includegraphics[width = 0.4\textwidth]{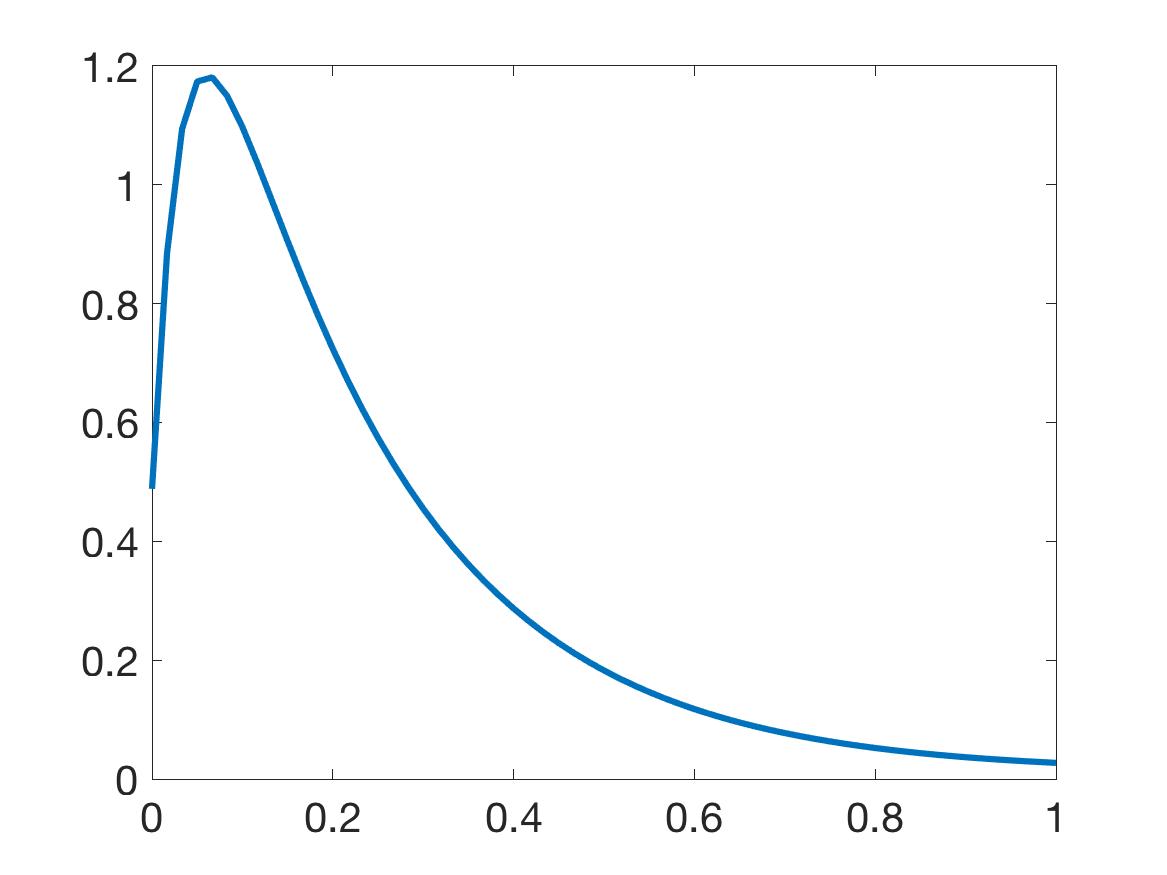}
			\caption{\label{fig T} The graph of the function $t \mapsto \|G_t(\cdot, t)\|_{L^2(\partial \Omega)}$ where $G$ is the function computed from the true source function in Test 1. We observe that the indirect data $G_t$ contains most information on $(0, 0.2)$.}
		\end{center}
	\end{figure}
	\label{rem T}
\end{remark}
We chose $N_{\bx} = 100$ and $N_t = 60$ in this section.
In all tests, the known function $f$ is chosen as
\[
	f(\bx, t) = 1 + 0.2e^{t|\bx|^2} \quad \bx \in \Omega, t \in [0, T]
\]
and the known function $c(\bx)$ is set to be
\[
	c(\bx) = 0.2 |\bx|^2 \quad \bx \in \Omega.
\]

In this section, we show the following numerical results.

\noindent {\bf Test 1}.
In this test, the true source function $p_{\rm true}$ is smooth and given by
\[
	p_{\rm true} = \left\{
		\begin{array}{ll}
			\exp\left(\frac{r^2}{r^2 - 0.5^2}\right) & \mbox{if }r = \sqrt{(x - 0.3)^2 + y^2} < 0.5
			\\
			0 &\mbox{otherwise.}
		\end{array}
	\right.
\]

The numerical result for this test is displayed in Figure \ref{fig Test 1}.
\begin{figure}[h!]
	\begin{center}
		\subfloat[The true source function]{
			\includegraphics[width = 0.45\textwidth]{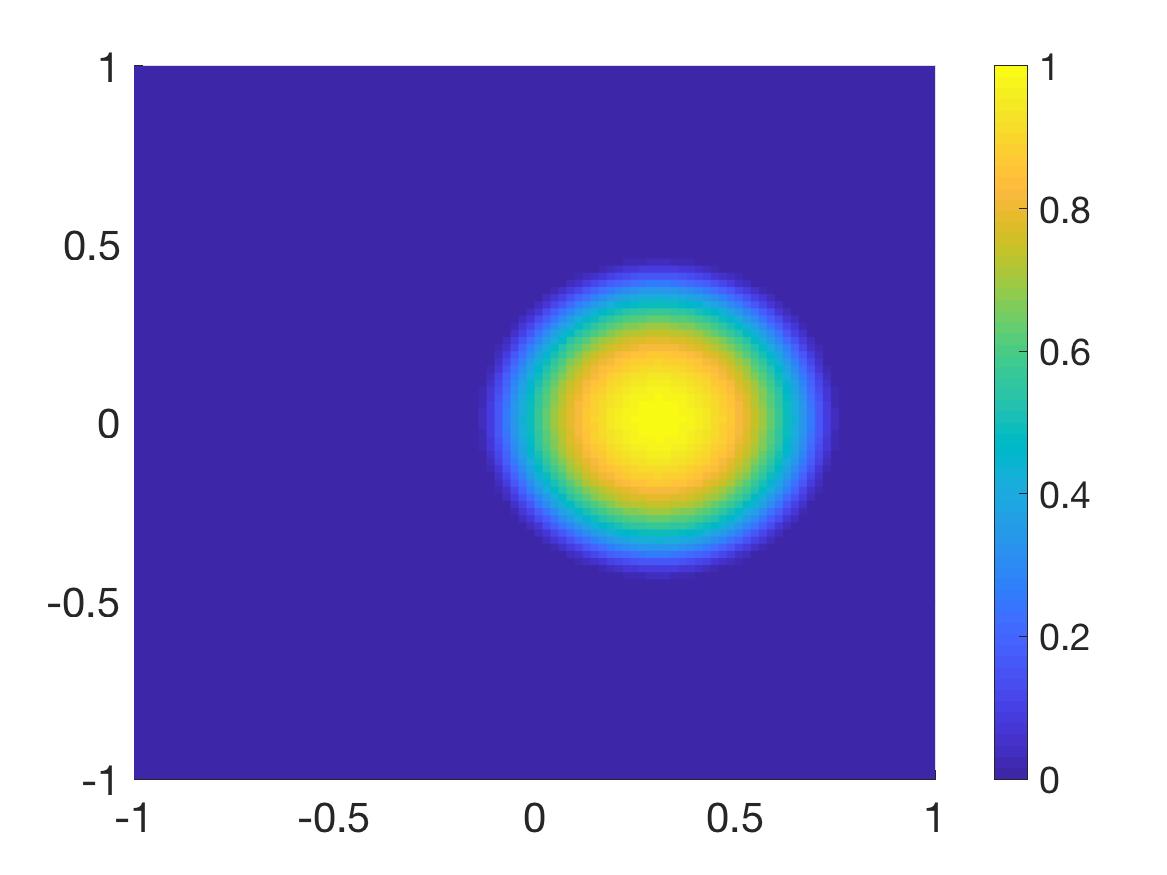}
		} \quad
		\subfloat[The computed source function, $\delta = 0\%$]{
			\includegraphics[width = 0.45\textwidth]{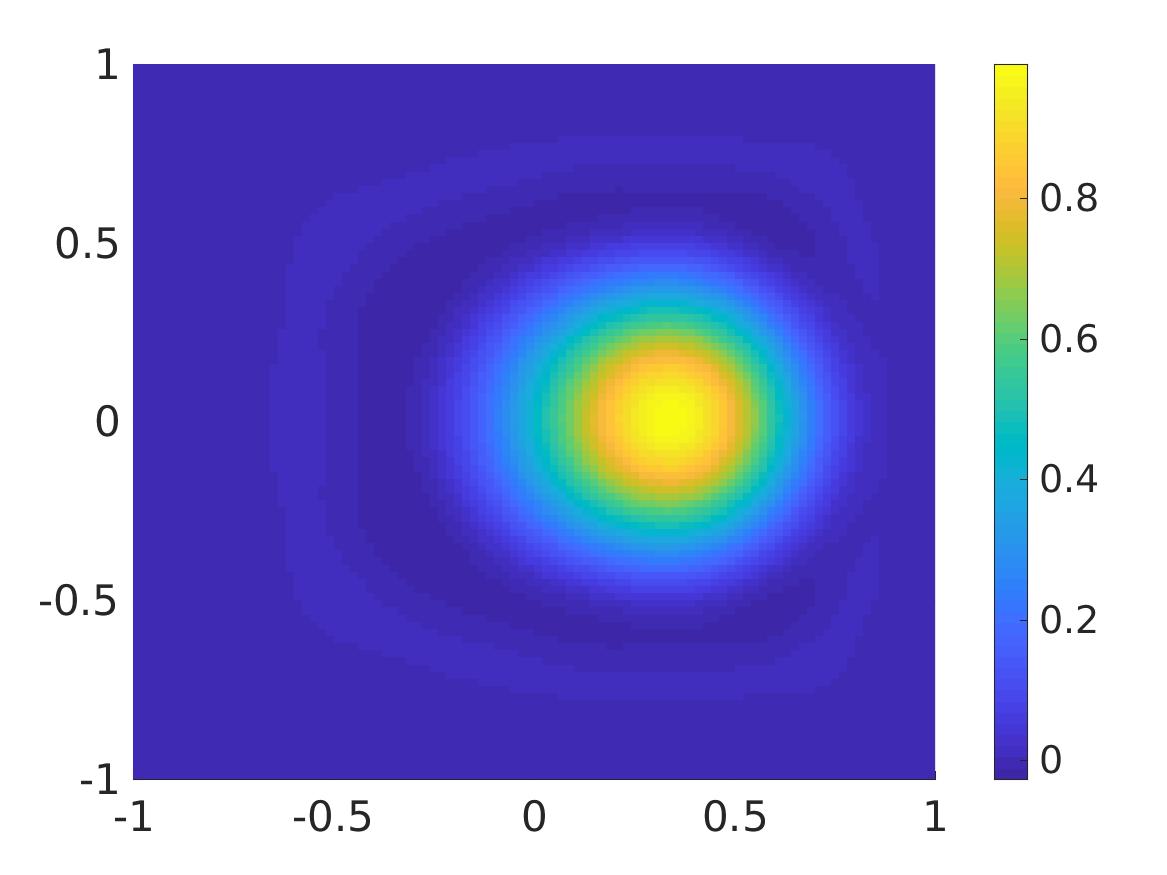}
		}
		
		\subfloat[The computed source function, $\delta = 5\%$]{
			\includegraphics[width = 0.45\textwidth]{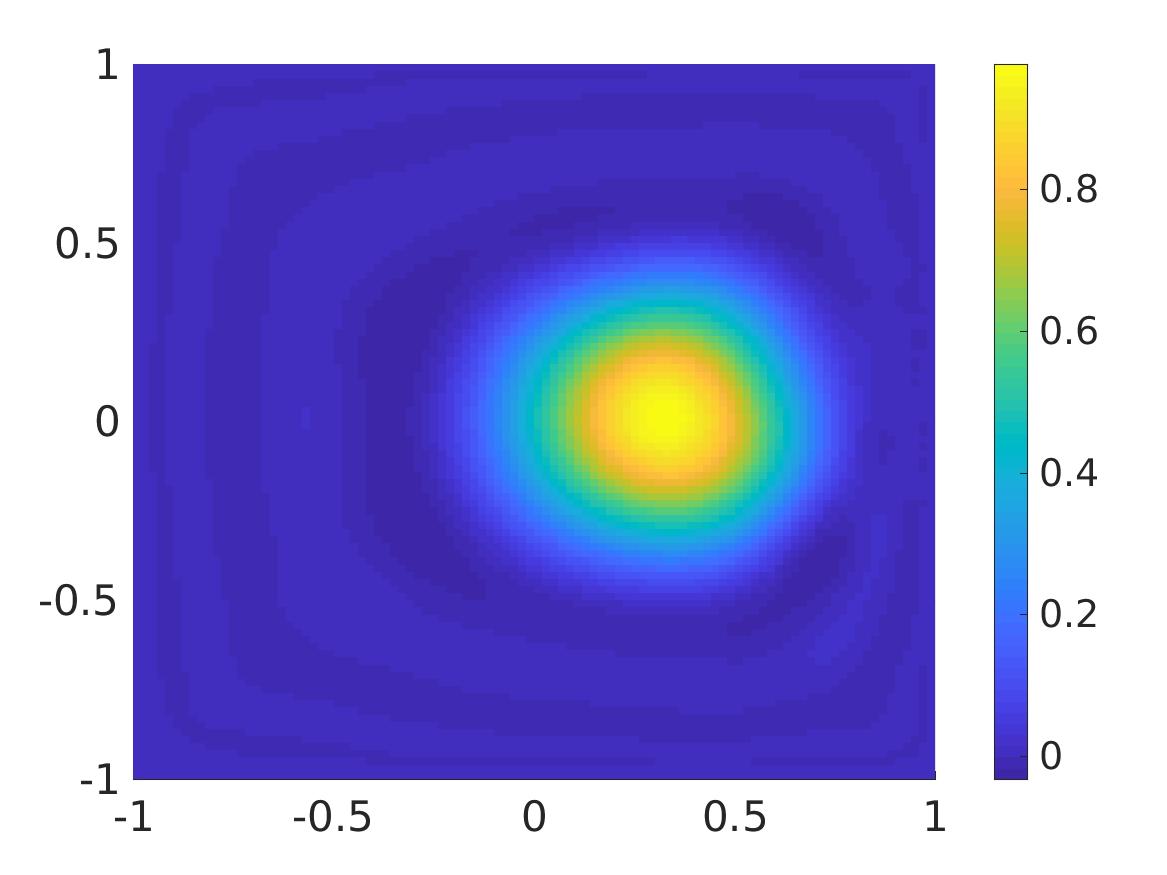}
		} \quad
		\subfloat[The computed source function, $\delta = 10\%$]{
			\includegraphics[width = 0.45\textwidth]{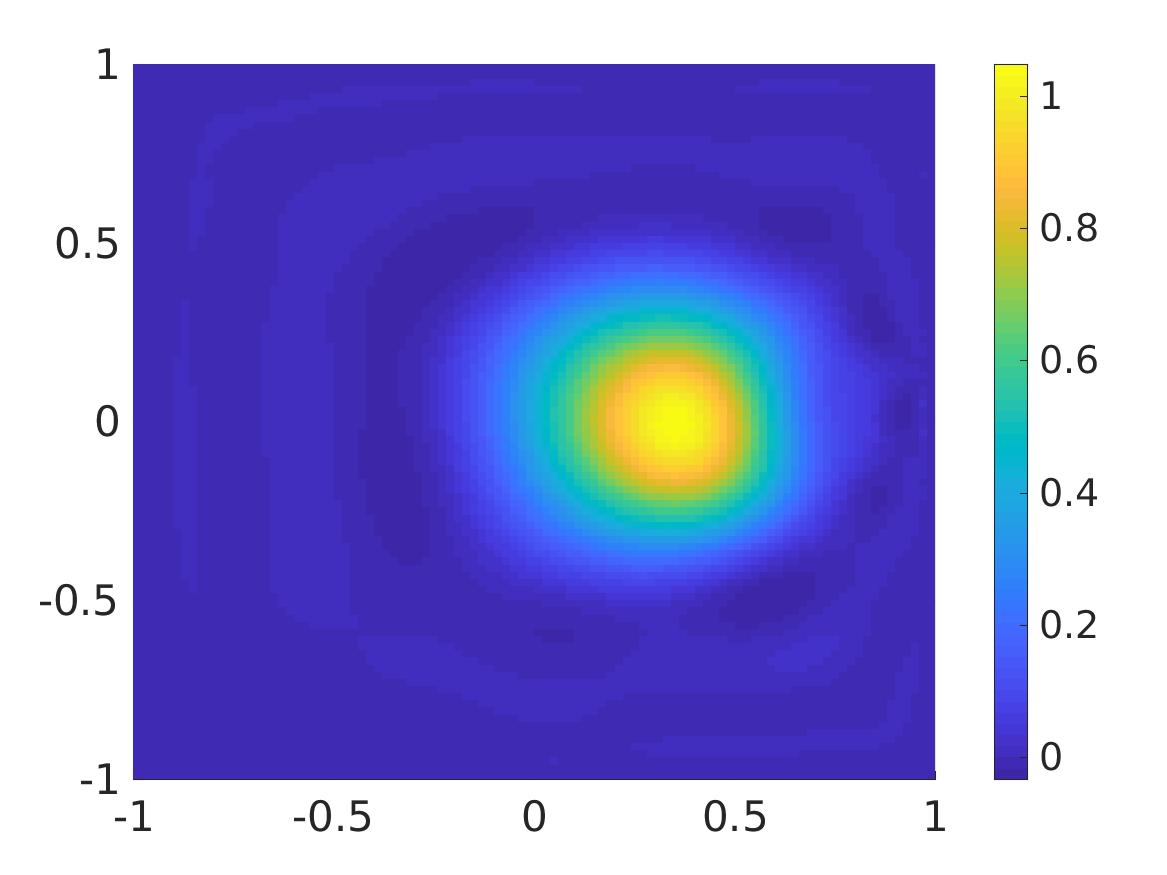}
		}
	\caption{\label{fig Test 1} Test 1. The true and computed source functions with different noise levels. The constructed source functions are quite accurate.}
	\end{center}
\end{figure}

It is evident that our method well reconstructs the source function $p_{\rm true}$.  
The location and shape of the circular ``inclusion" can be identified. 
The true maximum value of the inclusion is $1$.
The reconstructed maximum value of the inclusion is computed with small errors. 
In fact,
\begin{enumerate}
\item when $\delta = 0\%,$ $\max_{\bx \in \Omega} p_{\rm comp}(\bx) = 0.991$ and the coresponding relative error is $0.9\%$;
\item when $\delta = 5\%,$ $\max_{\bx \in \Omega} p_{\rm comp}(\bx) = 0.976$ and the coresponding relative error is $2.4\%$; 
\item when $\delta = 10\%,$ $\max_{\bx \in \Omega} p_{\rm comp}(\bx) = 1.048$ and the coresponding relative error is $4.8\%$. 
\end{enumerate}

\noindent {\bf Test 2.} We test our method for the case when $p_{\rm true}$ is given by the smooth function
\[
	p_{\rm true} = \left\{
		\begin{array}{ll}
			\exp\left(\frac{r_1^2}{r_1^2 - 0.5^2}\right) & \mbox{if }r_1 = \sqrt{(x - 0.4)^2 + (y-0.4)^2} < 0.5
			\\
			-\exp\left(\frac{r_2^2}{r_2^2 - 0.5^2}\right) & \mbox{if }r_2 = \sqrt{(x + 0.4)^2 + (y + 0.4)^2} < 0.5
			\\
			0 &\mbox{otherwise.}
		\end{array}
	\right.
\]
In this test, the true source function has a negative ``inclusion" and a positive one. 
The numerical results for this test are displayed in Figure \ref{fig Test 2}.
\begin{figure}[h!]
	\begin{center}
		\subfloat[The true source function]{
			\includegraphics[width = 0.45\textwidth]{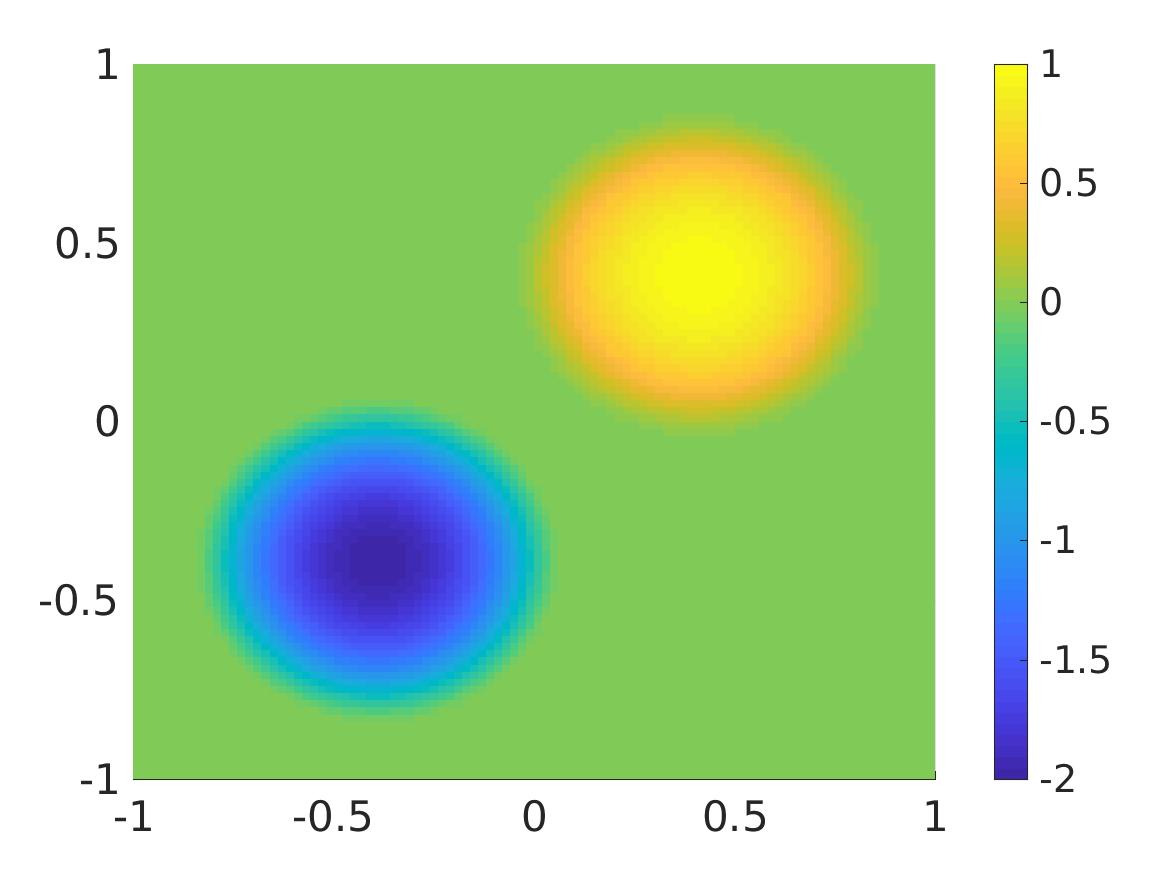}
		} \quad
		\subfloat[The computed source function, $\delta = 0\%$]{
			\includegraphics[width = 0.45\textwidth]{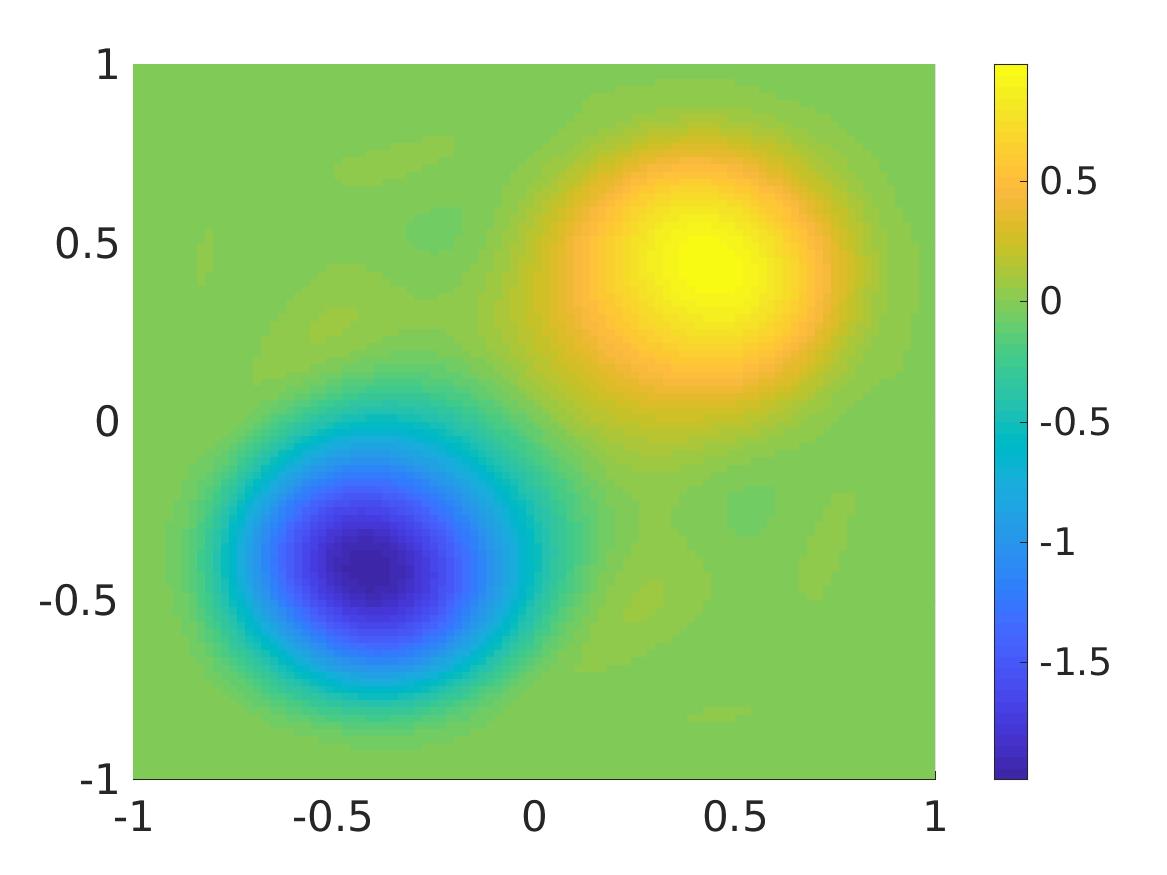}
		}
		
		\subfloat[The computed source function, $\delta = 5\%$]{
			\includegraphics[width = 0.45\textwidth]{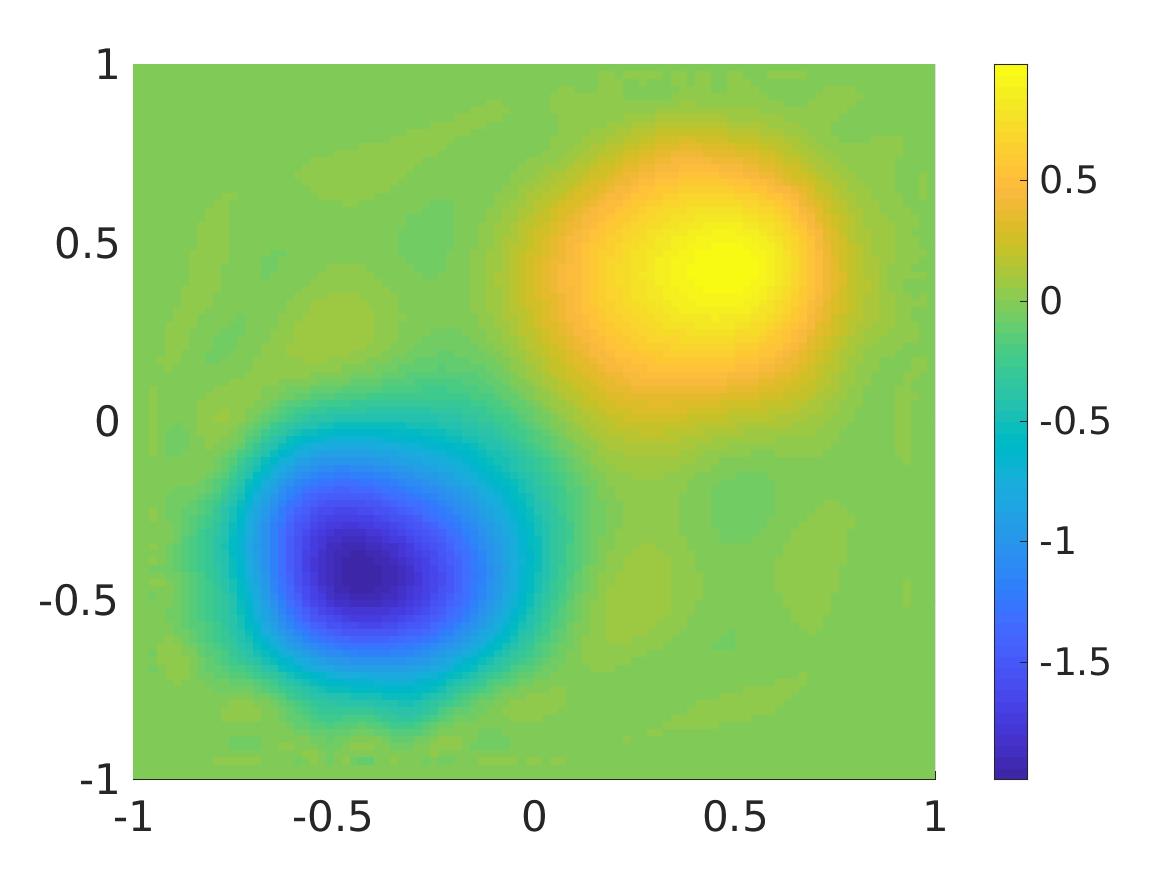}
		} \quad
		\subfloat[The computed source function, $\delta = 10\%$]{
			\includegraphics[width = 0.45\textwidth]{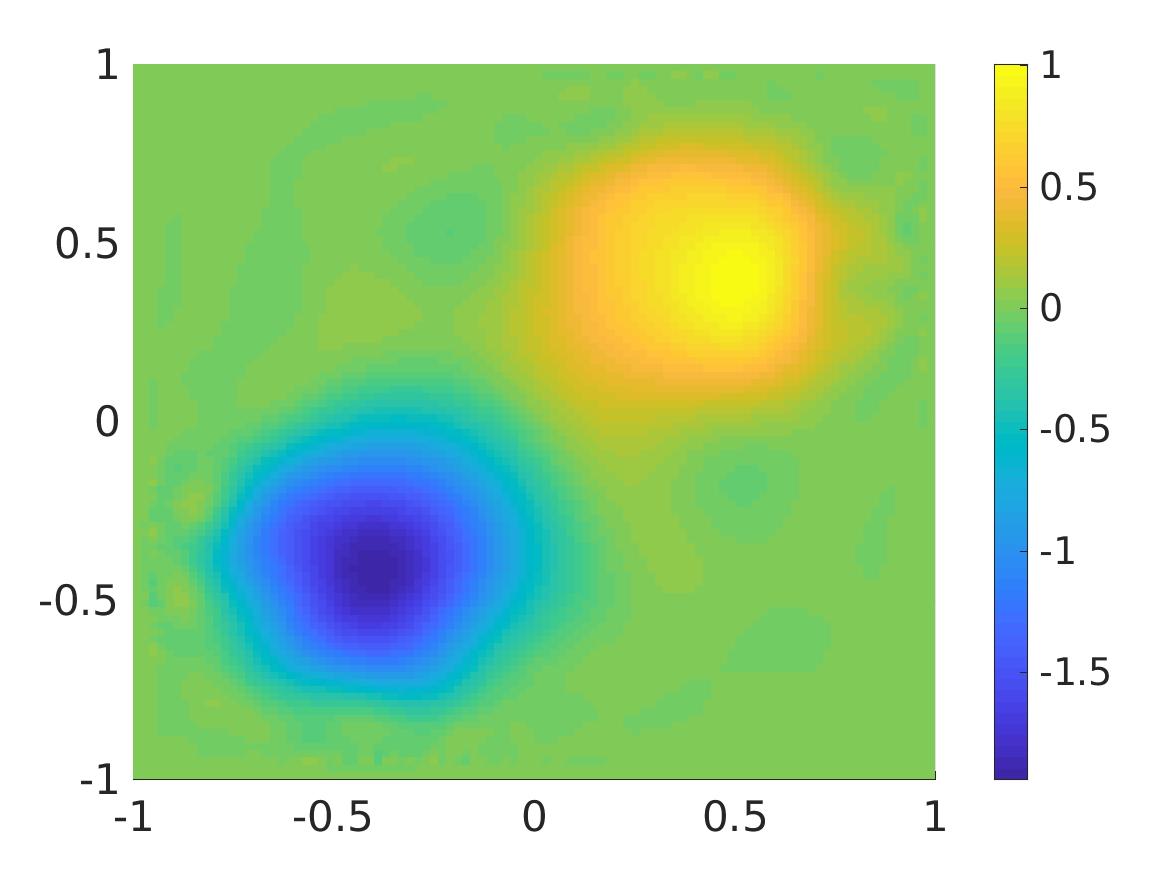}
		}
	\caption{\label{fig Test 2} Test 2. The true and computed source functions with different noise levels. Both positive and negative inclusions are sucessfully detected.}
	\end{center}
\end{figure}
The true and computed local extreme values of the source function at two inclusions are displayed in Table \ref{table test 2}. This table show that our method is stable with respect to noise.

\begin{table}[h!]
\caption{\label{table test 2} Test 2. The local extreme values of the functions $p_{\rm true}$ and $p_{\rm comp}$ at two inclusions. The relative error is denoted by ${\rm err}_{\rm rel}$. Inclusion 1 is the one on the top right and inclusion 2 is the one on the bottom left.}
\begin{center}
\begin{tabular}{|c|c|c|c|c|}
\hline
	Inclusion & noise level &extreme value$_{\rm true}$ & extreme value$_{\rm comp}$ & $({\rm err}_{\rm rel})$ 
	\\
	\hline
	1 & 0\%&1&0.988&1.2\%
	\\
	2&0\%&-2&-1.99&0.5\%
	\\
	1&5\%&1&0.984&1.6\%
	\\
	2&5\%&-2&-1.987&0.65\%
	\\
	1&10\%&1&1.004&0.4\%
	\\
	2&10\%&-2&-1.938&3.1\%
	\\
	\hline
\end{tabular}
\end{center}
\label{default}
\end{table}%

\noindent {\bf Test 3.} We next check the case when the source function is not smooth. 
In this case, we consider the piecewise constant function
	\begin{equation}
		p_{\rm true} = 
		\left\{
			\begin{array}{ll}
				-2 &\mbox{if } (x - 0.45)^2 + y^2 < 0.25^2\\
				2 &\mbox{if } 5(x + 0.45)^2 + \frac{1}{3} y^2 < 0.25^2.
			\end{array}
		\right.
	\label{ptrue test 3}
	\end{equation}
	The graph of the function $p_{\rm true}$ has two ``inclusions" with different shapes, a disk and an ellipse.
	The graphs of the true and computed source function are displayed in Figure \ref{fig Test 3}.	
	\begin{figure}[h!]
	\begin{center}
		\subfloat[The true source function]{
			\includegraphics[width = 0.45\textwidth]{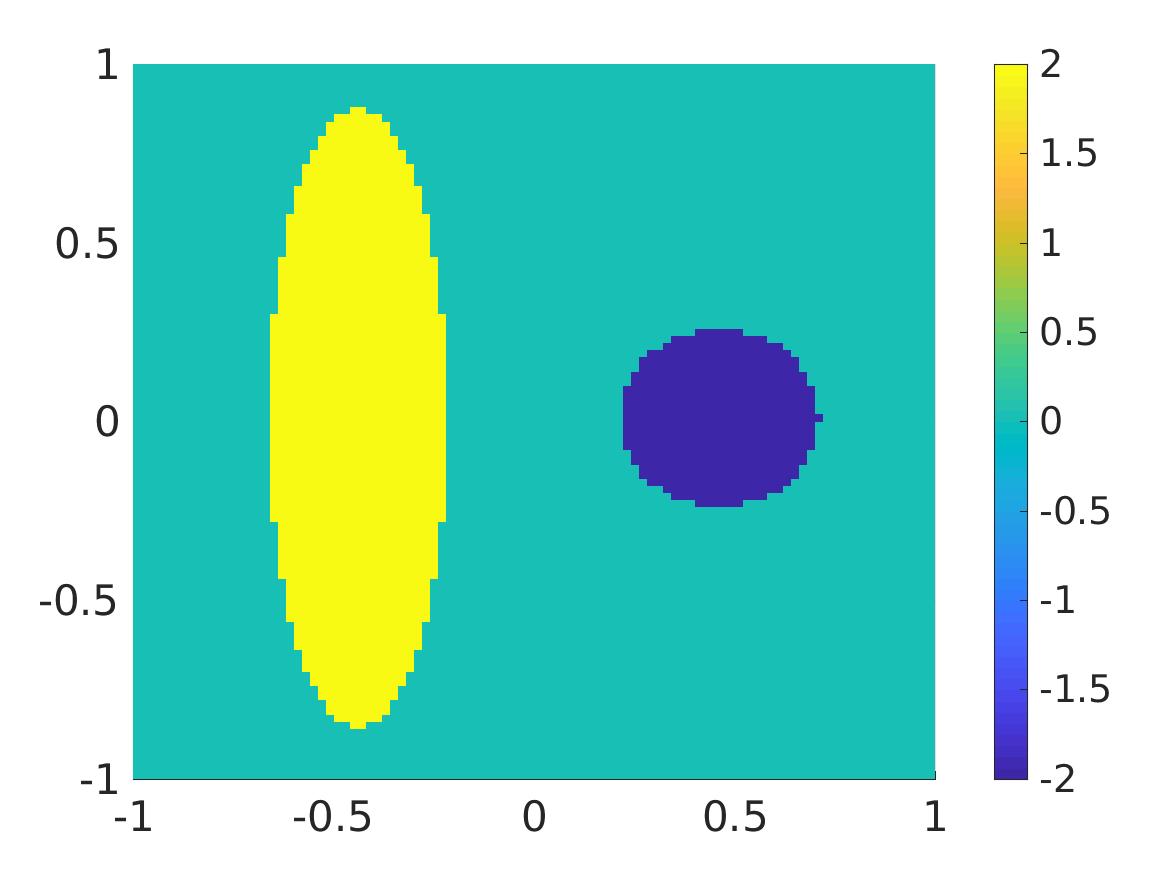}
		} \quad
		\subfloat[The computed source function, $\delta = 0\%$]{
			\includegraphics[width = 0.45\textwidth]{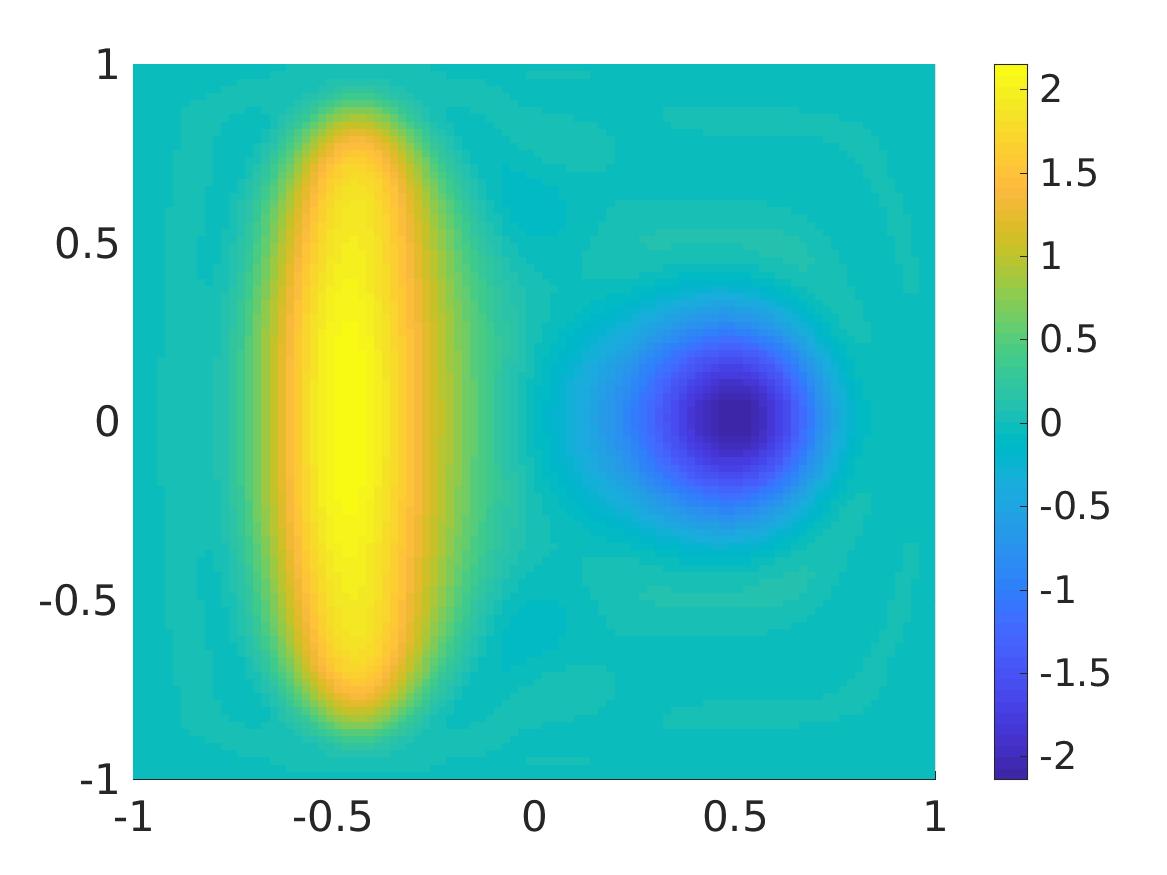}
		}
		
		\subfloat[The computed source function, $\delta = 5\%$]{
			\includegraphics[width = 0.45\textwidth]{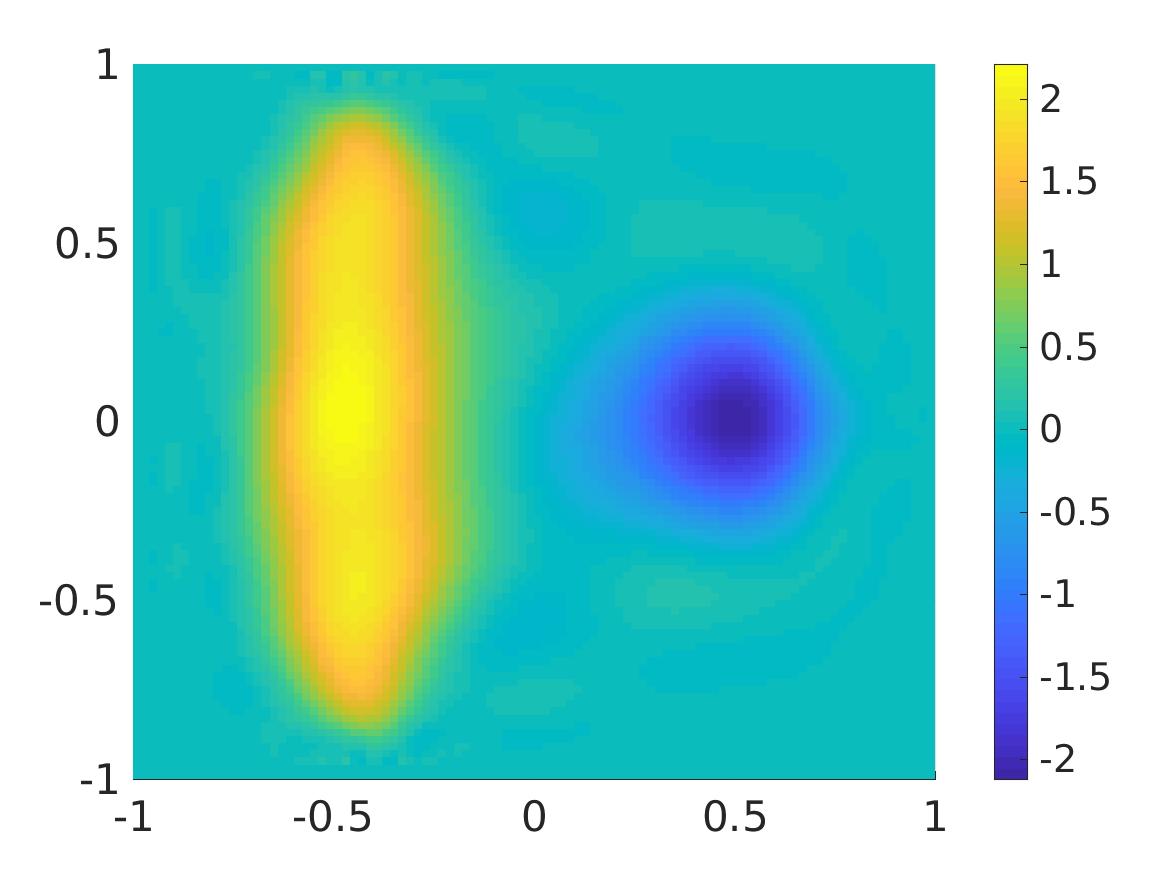}
		} \quad
		\subfloat[\label{fig 3d}The computed source function, $\delta = 10\%$]{
			\includegraphics[width = 0.45\textwidth]{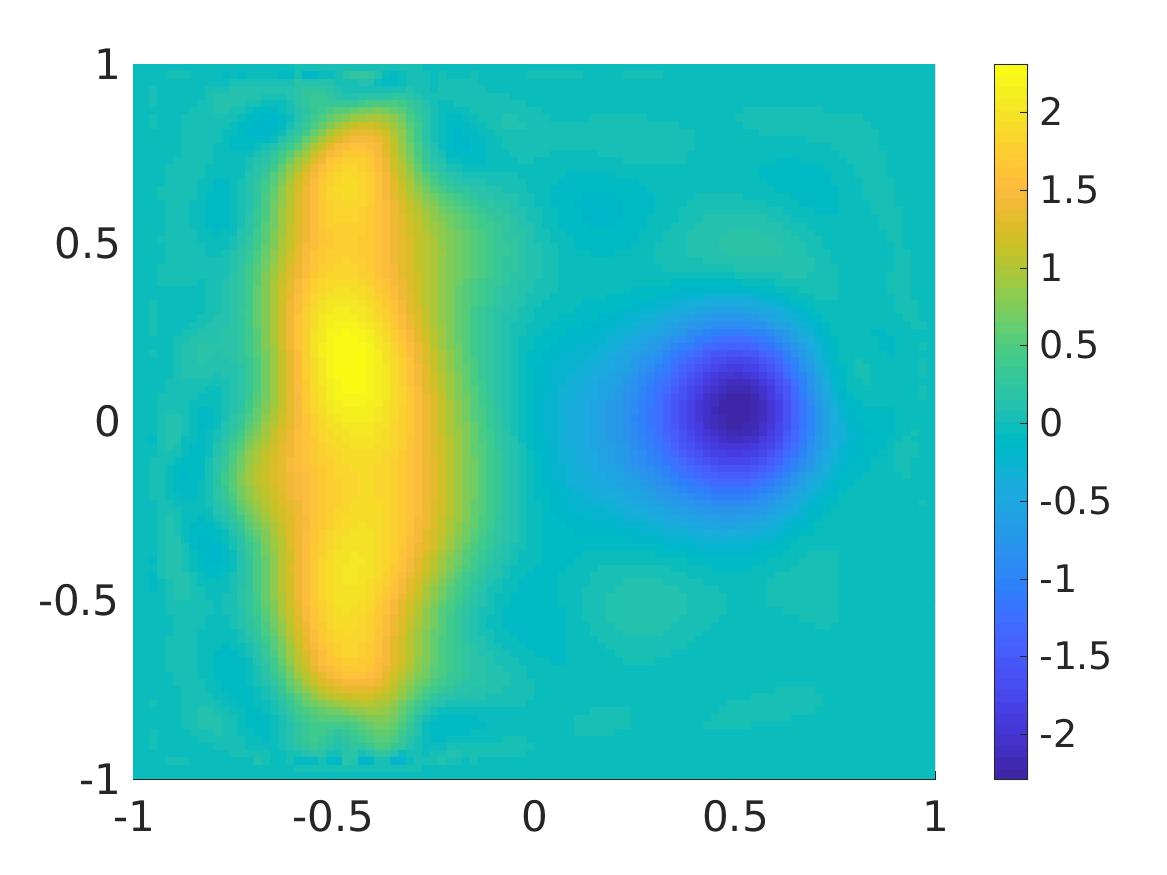}
		}
	\caption{\label{fig Test 3} Test 3. The true and computed source functions with different noise levels. Both ellipse and disk are successfully identified.}
	\end{center}
\end{figure}

The reconstruction of the image of the source function in this test is acceptable. 
Table \ref{table 2} shows the strength of our method in the sense that we can reconstruct the values of those two inclusions with acceptable error. 
\begin{table}[h!]
\caption{\label{table 2} Test 3. The true and reconstructed extreme values of inclusions. Inclusion 1 is the ellipse and inclusion 2 is the disk. The relative error is denoted by 
${\rm err}_{\rm rel}$. 
}
\begin{center}
\begin{tabular}{|c|c|c|c|c|}
\hline
Inclusion&Noise level&Extreme value$_{\rm true}$&Extreme value$_{\rm comp}$&${\rm err}_{\rm rel}$
\\
\hline
1&0\%&2&2.151&7.5\%
\\
2&0\%&-2&-2.133&6.7\%
\\
1&5\%&2&2.211&10.6\%
\\
2&5\%&-2&-2.117&5.9\%
\\
1&10\%&2&2.312&15.6\%
\\
2&10\%&-2&-2.29&14.5\%\%\\
\hline
\end{tabular}
\end{center}
\label{default}
\end{table}%

\noindent {\bf Test 4.} We test the nonsmooth true source function again with a complicated support. The true function is the characteristic function of the letter $\Omega.$ 
The numerical results for this test are shown in Figure \ref{fig test 4}.
\begin{figure}[h!]
	\begin{center}
		\subfloat[The true source function]{
			\includegraphics[width = 0.45\textwidth]{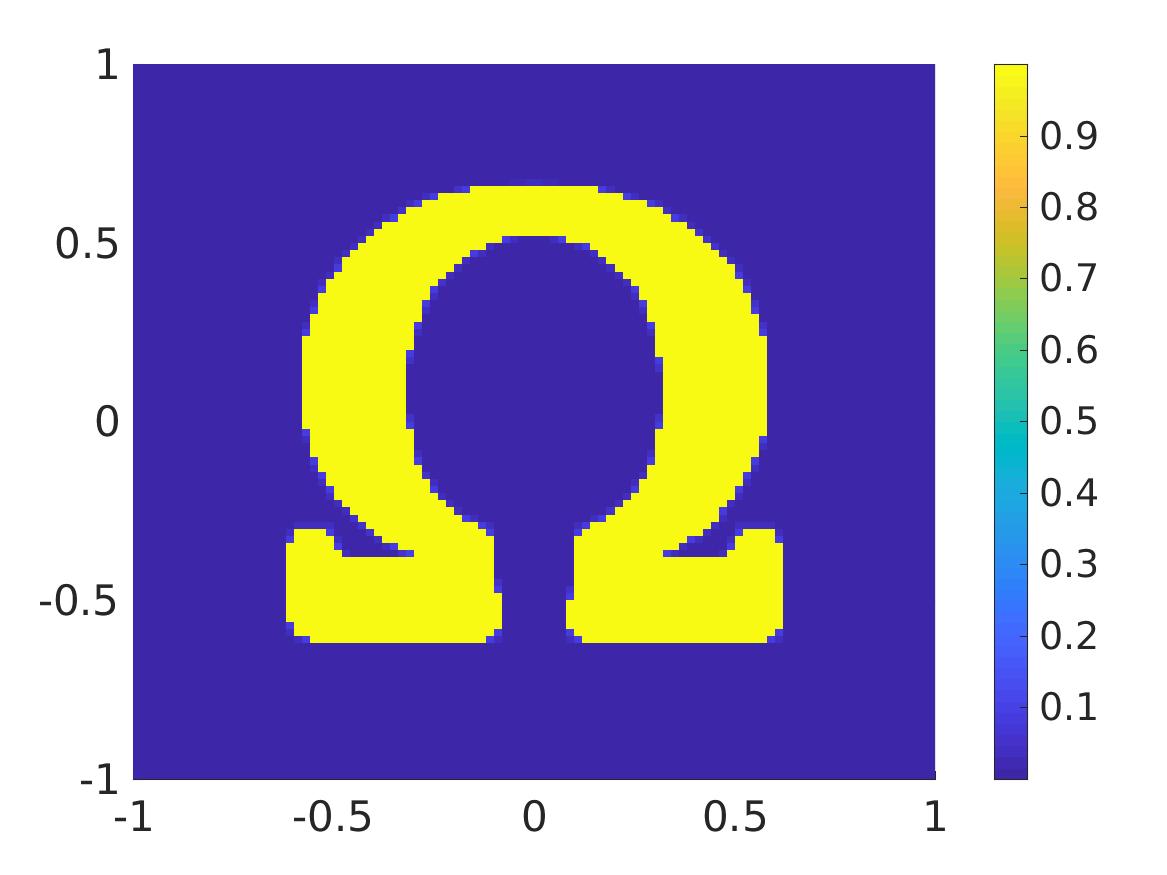}
		} \quad
		\subfloat[The computed source function, $\delta = 0\%$]{
			\includegraphics[width = 0.45\textwidth]{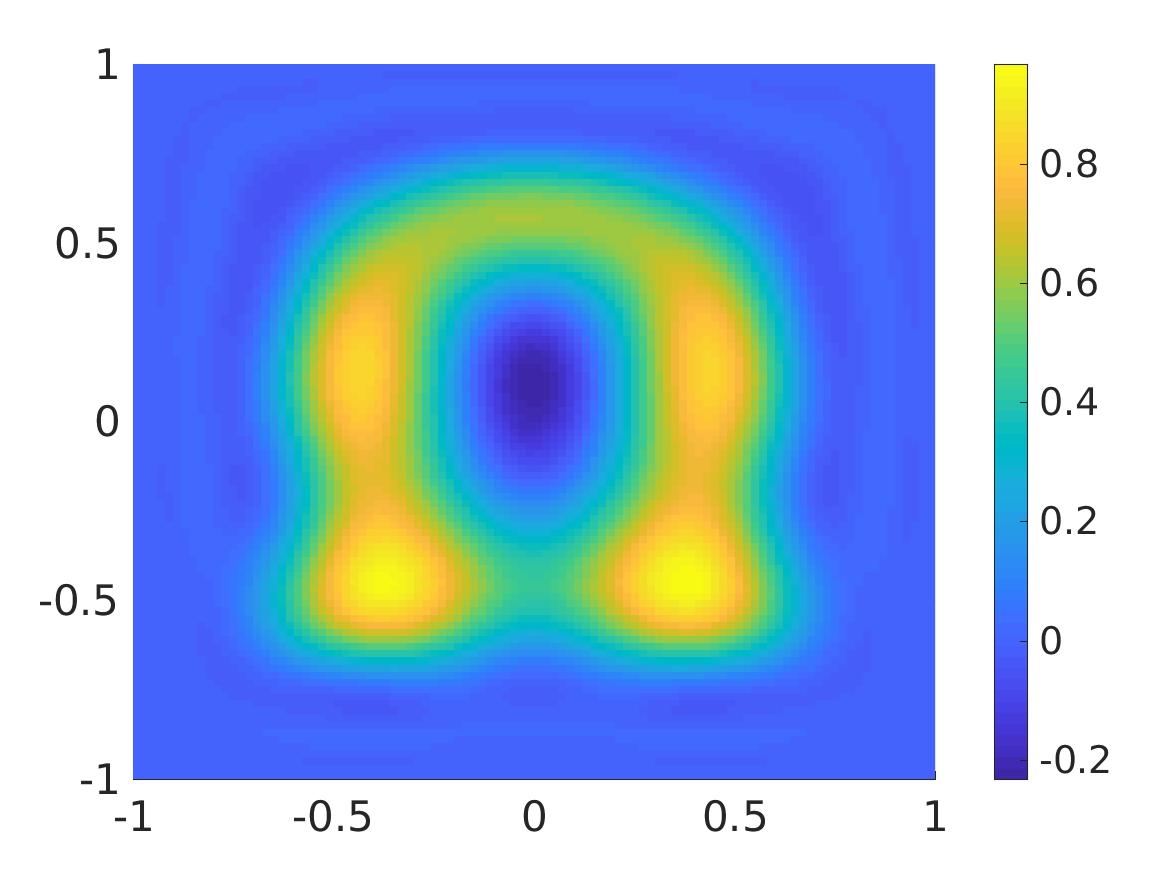}
		}
		
		\subfloat[The computed source function, $\delta = 5\%$]{
			\includegraphics[width = 0.45\textwidth]{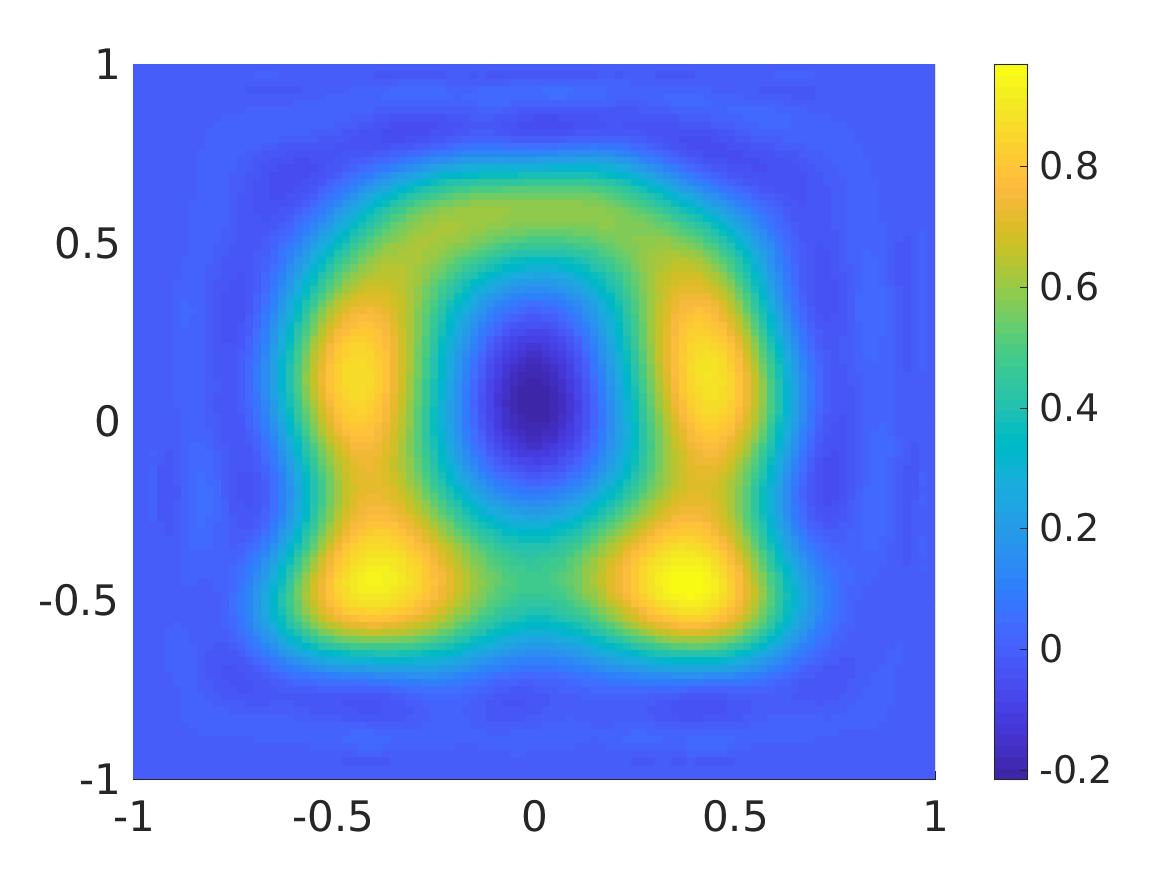}
		} \quad
		\subfloat[The computed source function, $\delta = 10\%$]{
			\includegraphics[width = 0.45\textwidth]{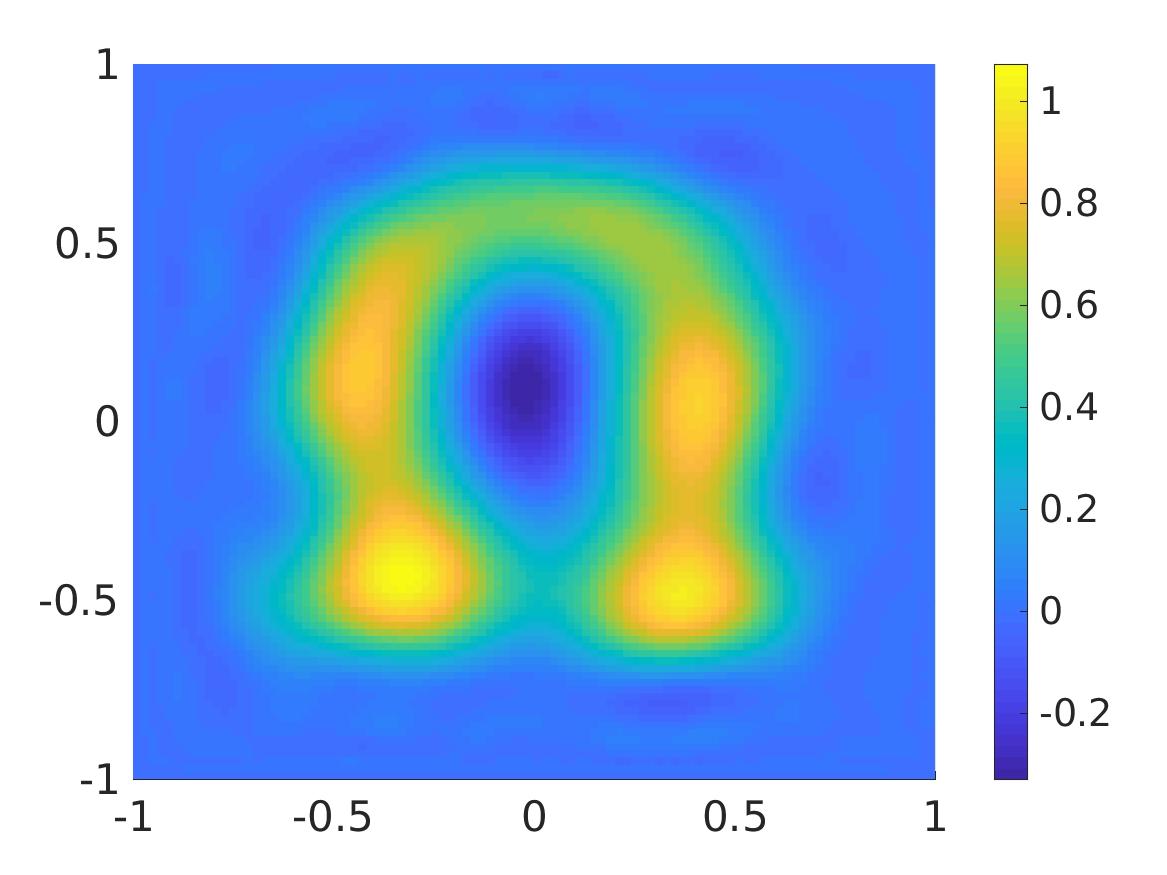}
		}
	\caption{\label{fig test 4} Test 4. The true and computed source functions with different noise levels. The letter $\Omega$ is successfully detected.}
	\end{center}
\end{figure}

Note that the maximal value of the reconstructed functions are acceptable. 
When $\delta = 0\%,$ $\max p_{\rm comp} = 0.97$ (relative error 3\%).
When $\delta = 5\%,$ $\max p_{\rm comp} = 0.97$ (relative error 3\%).
When $\delta = 10\%,$ $\max p_{\rm comp} = 1.073$ (relative error 7.3\%).

\begin{remark}
	Despite of the presence of the initial condition in equation \eqref{3.2}, it is evident that the quasi-reversibility method provides good numerical results with small relative errors. 
\end{remark}

\section{Application to a coefficient inverse problem} \label{sec cip}

In this section, we propose a numerical method to solve a severely ill-posed nonlinear coefficient inverse problem.

\subsection{The problem statement}
Problem \ref{ip} arises from a coefficient inverse problem for parabolic equations. 
For the simplicity, consider the problem of determining the coefficient $c(\bx)$ from the measurements of $\partial_n \mathfrak{u}(\bx, t)$ on $\partial \Omega \times [0, T]$ where, $\mathfrak{u}(\bx, t)$ is the solution of the following problem
\begin{equation}
	\left\{
		\begin{array}{rcll}
			\mathfrak{u}_t(\bx, t) &=&  \Delta \mathfrak{u}(\bx, t) + c(\bx) \mathfrak{u}(\bx, t)&\bx \in \Omega, t > 0\\
			\mathfrak{u}(\bx, t) &=& g_1(\bx, t) & \bx \in \partial\Omega, t > 0,\\			
			\mathfrak{u}(\bx, 0) &=& g(\bx) &\bx \in \Omega.
		\end{array}
	\right.
\label{2.4}
\end{equation}
Assume that the initial condition $g(\bx) > 0$ for all $\bx \in \Omega$ and the boundary condition $g_1$ satisfying $g_1(\bx, 0) = g(\bx)$ for all $\bx$ in $\partial \Omega.$ 
Consider the following nonlinear inverse problem.
\begin{problem}
	Let $T > 0.$ Determine the coefficient $c(\bx)$, $\bx \in \Omega$ from the measurement of
	\[
		F(\bx, t) = \partial_n \mathfrak{u}(\bx, t)
	\] for all $\bx \in \partial \Omega,$ $t \in [0, T].$
\label{pro coef}
\end{problem}	 

Problem \ref{pro coef} and its related versions are studied intensively.
Up to the knowledge of the author, the widely used method to solve this problem is the optimal control approach, see e.g., \cite{Borceaetal:ip2014, CaoLesnic:nmpde2018, CaoLesnic:amm2019, KeungZou:ip1998, YangYuDeng:amm2008} and references therein. 
The main drawback of this method is that the initial guess for the true solution is important to obtain numerical results. 
Un like this, we assume that we do not have any advanced knowledge of the true solution to Problem \ref{pro coef} and take the initial guess as a constant function.

Consider the circumstance that an initial guess for the function $c$, named as $c_0$, is known.
Then, we write
\begin{equation}
	c(\bx) = c_0(\bx) +  p(\bx).
	\label{perturbation}
\end{equation}
Denote by the function $\mathfrak{u}_0(\bx)$ the solution of \eqref{2.4} with $c_0$ replacing $c$ and let $w = \mathfrak{u} - \mathfrak{u}_0$.
It is not hard to see that
\begin{equation}
	\left\{
	\begin{array}{rcll}
		w_t(\bx, t) &=& \Delta w(\bx, t) + c_0 w(\bx, t) +  p(\bx)\mathfrak{u}(\bx, t) &\bx \in \Omega, t \in [0, T], \\
		w(\bx, t) &=& 0 &\bx \in \partial \Omega, t > 0,\\
		w(\bx, 0) &=& 0 &\bx \in \Omega.
	\end{array}
	\right.
	\label{1.44444}
\end{equation}
Since $c_0$ is an initial guess of $c$, we can replace the function $\mathfrak{u}$ in the differential equation in \eqref{1.44444} above by $ \mathfrak{u}_0$ to obtain
\begin{equation}
	\left\{
	\begin{array}{rcll}
		w_t(\bx, t) &=& \Delta w(\bx, t) + c_0 w(\bx, t) +  p(\bx)\mathfrak{u}_0(\bx, t) &\bx \in \Omega, t \in [0, T], \\
		w(\bx, t) &=& 0 &\bx \in \partial \Omega, t > 0,\\
		w(\bx, 0) &=& 0 &\bx \in \Omega
	\end{array}
	\right.
	\label{5.3}
\end{equation}
which leads to a particular case of Problem \ref{ip} with $f = \mathfrak{u}_0$.
We can compute $p(\bx)$ and therefore $c(\bx)$ via solving Problem \ref{ip} for the heat equation \eqref{5.3}.
Denoting the computed $c(\bx)$ by $c_1(\bx)$ and let $\mathfrak{u}_1(\bx, t)$ be the solution to \eqref{2.4} with $c = c_1$. 
We then find $c_2$ by solving Problem \ref{ip} for the heat equation \eqref{5.3} with $\mathfrak{u}_1$ replacing $\mathfrak{u}_0$.
The process is repeated to compute $c_3, c_4, \dots$ and we choose $c_{\rm comp} =c_{n^{*}}$ when $n^*$ is a fixed positive integer.
We summarize this numerical method to compute $c$ in Algorithm \ref{alg 2}.

\begin{remark}
	Imposing assumption \eqref{perturbation}, where the function $c_0$ is known and the unknown function $p$ is small, only plays the role of the suggestion for the ``linearization" analysis.
	However, 	in the reverse direction, the numerical results show that Algorithm \ref{alg 2} can be applied and provide good numerical results even in the case when $c_0$ is far away from the function $c$.
	Here, we understand by ``$c_0$ is far away from the function $c$" in two senses: (1) the complicated geometry of the true function $c$ and (2) the high contrasts.
\end{remark}

\begin{remark}		
The difficulty about the presence of an initial guess $c_0$ can be overcome  using the quasi-reversibility method. The authors of \cite{KlibanovNik:ra2017, Klibanov:ip2015} introduce a convex functional, which minimizer yields the solution of the problem under consideration, by combining the quasi-reversibility method and the Carleman weight functions. Numerical results in 1D are presented in \cite{KlibanovNik:ra2017}. 
It is important and interested to numerically test their method in higher dimensions.
\end{remark}

\begin{algorithm}[h!]
\caption{\label{alg 2}The procedure to solve Problem \ref{pro coef}}
	\begin{algorithmic}[1]
	\State\, Set $c_0$ as a background constant  and compute the solution  $\mathfrak{u}_0$  to \eqref{2.4} with $c_0$ replacing $c$. 
	\State\, Assume, by induction, that we know $c_n(\bx)$ and $u_n(\bx, t)$, $\bx \in \Omega$, $t \in [0, T]$. We find $c_{n+1}$ and $\mathfrak{u}_{n+1}$ as follows.
	\State\,  Compute the Neuman data $G_n(\bx, t) = F(\bx, t) - \partial_n \mathfrak{u}_n(\bx, t)$  for all $\bx \in \partial \Omega,$ $t \in [0, T].$
	\State\, Solve Problem \ref{ip} with $f(\bx, t) = \mathfrak{u}_n(\bx, t)$ and $G(\bx, t) = G_n(\bx, t)$ by Algorithm \ref{alg} to obtain a function $p_n(\bx)$.
		Set $c_{n+1}(\bx) = c_0 + p_n(\bx).$
	\State\, Choose $c_{\rm comp} = c_{n^*}$ where $n^*$ is chosen by numerical experiment. In this section, we set $n^* = 20.$
	\end{algorithmic}
\end{algorithm}

In the next subsection, we will show some numerical results.
We also display the graph of the relative difference 
\begin{equation}
	e_n = \frac{\|c_n - c_{n - 1}\|_{L^{\infty}(\Omega)}}{\|c_{n - 1}\|_{L^{\infty}(\Omega)}}, \quad n \geq 1
\end{equation}
to show the convergence of Algorithm \ref{alg 2}.

\subsection{Numerical results}
We perform two numerical results due to Algorithm \ref{alg 2} below. 
In these tests, the noise level is $5\%$. The background function $c_0(\bx) = 1$ for all $\bx \in \Omega.$

\noindent {\bf Test 5.}
The function $c_{\rm true}$ is the step function taking value 3 inside a letter $\Sigma$ and $1$ otherwise. 
We display the obtained numerical results in Figure \ref{fig Test 6}.
\begin{figure}[h!]
	\begin{center}
		\subfloat[The true coefficient $c_{\rm true}$]{\includegraphics[width=.3\textwidth]{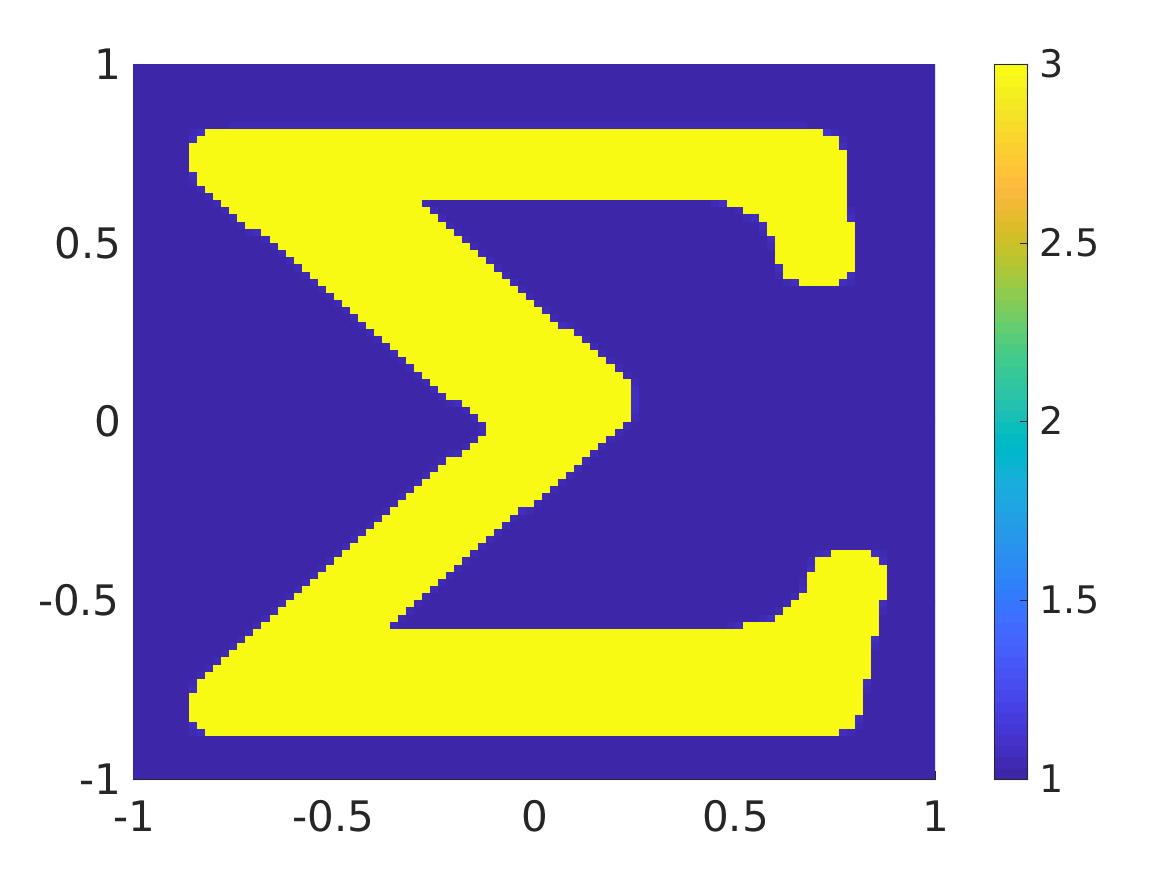}
		} \quad
		\subfloat[The function $c_1$]{\includegraphics[width=.3\textwidth]{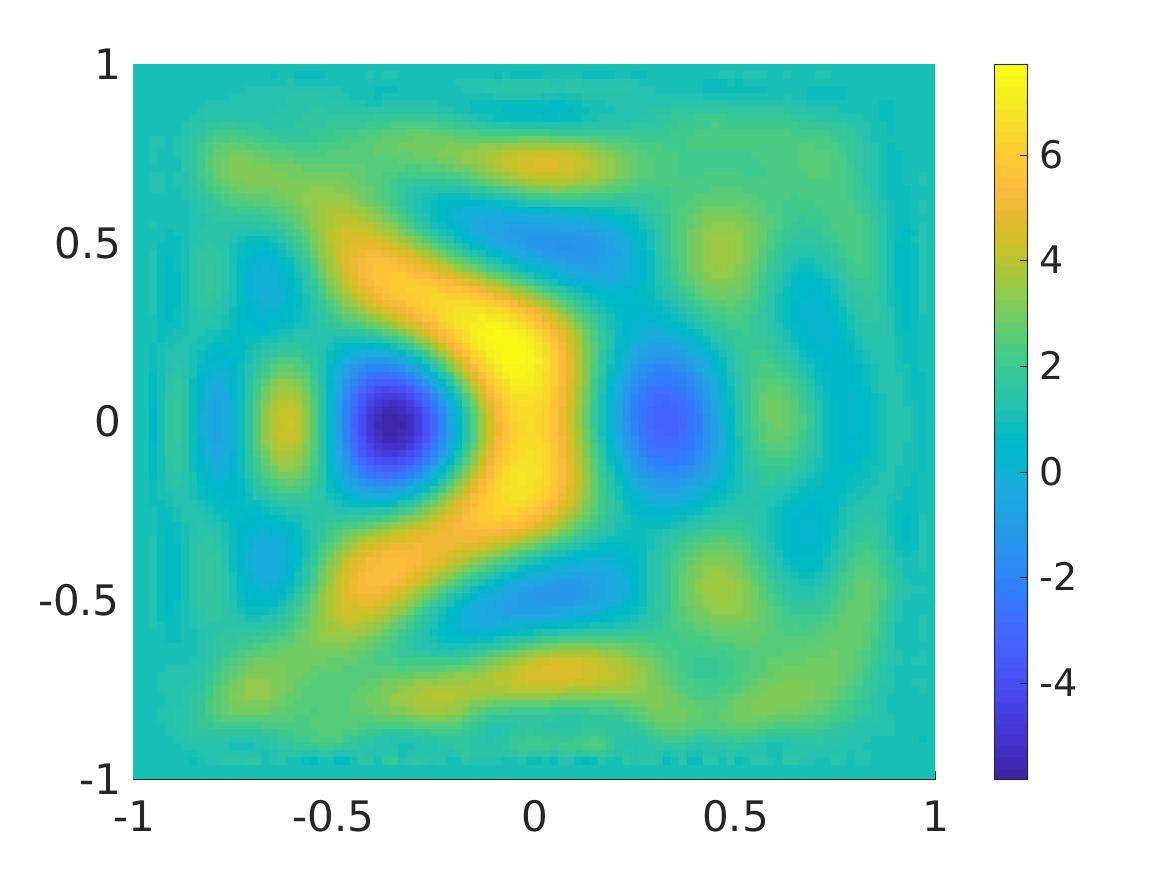}		
		} \quad
		\subfloat[The function $c_3$]{\includegraphics[width=.3\textwidth]{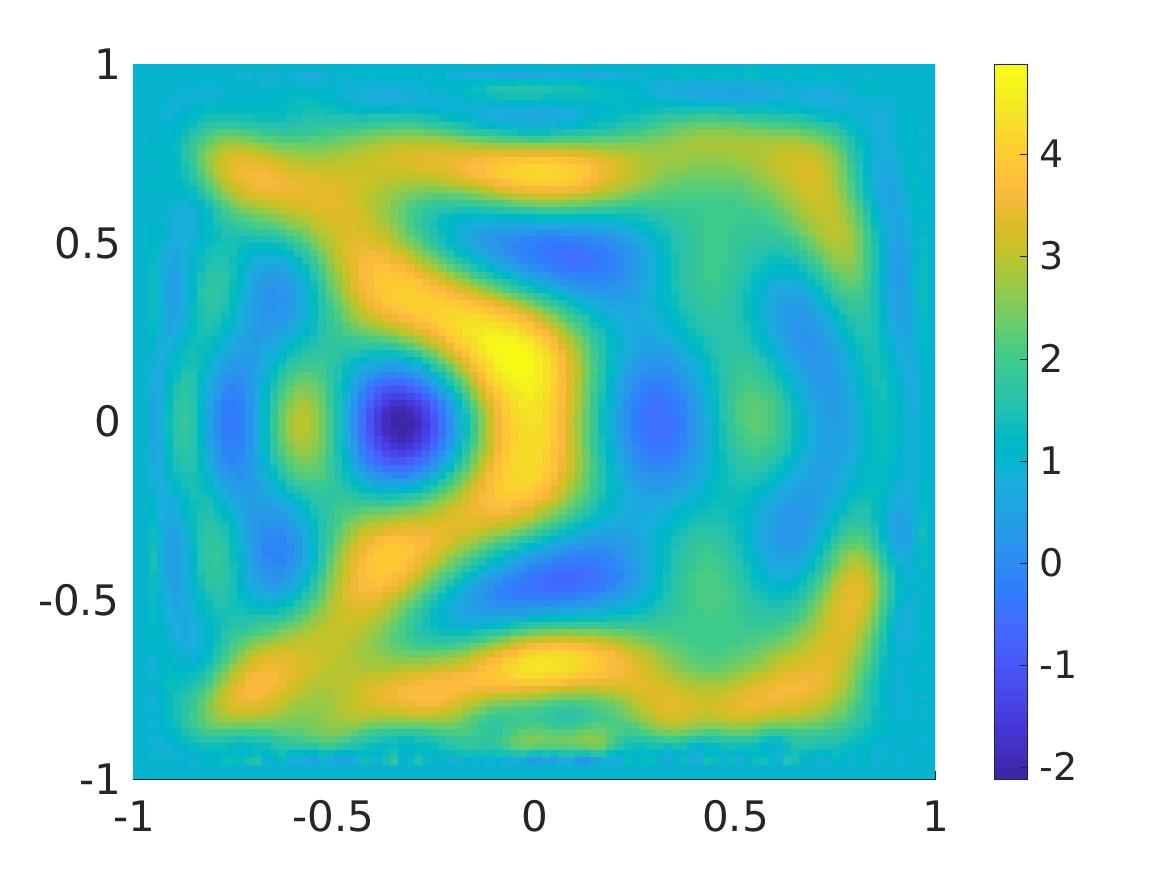}		
		}
		
		\subfloat[The function $c_{5}$]{\includegraphics[width=.3\textwidth]{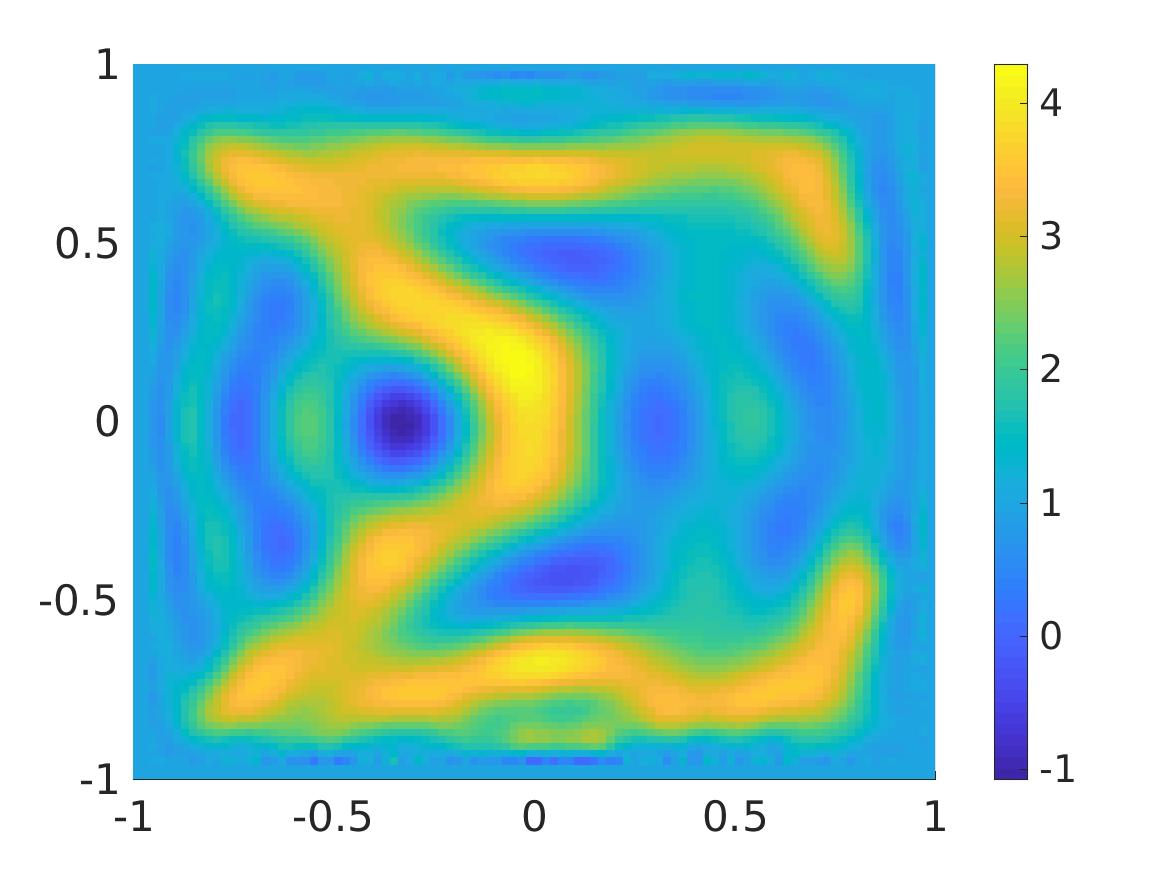}
		} \quad
		\subfloat[The function $c_{20}$]{\includegraphics[width=.3\textwidth]{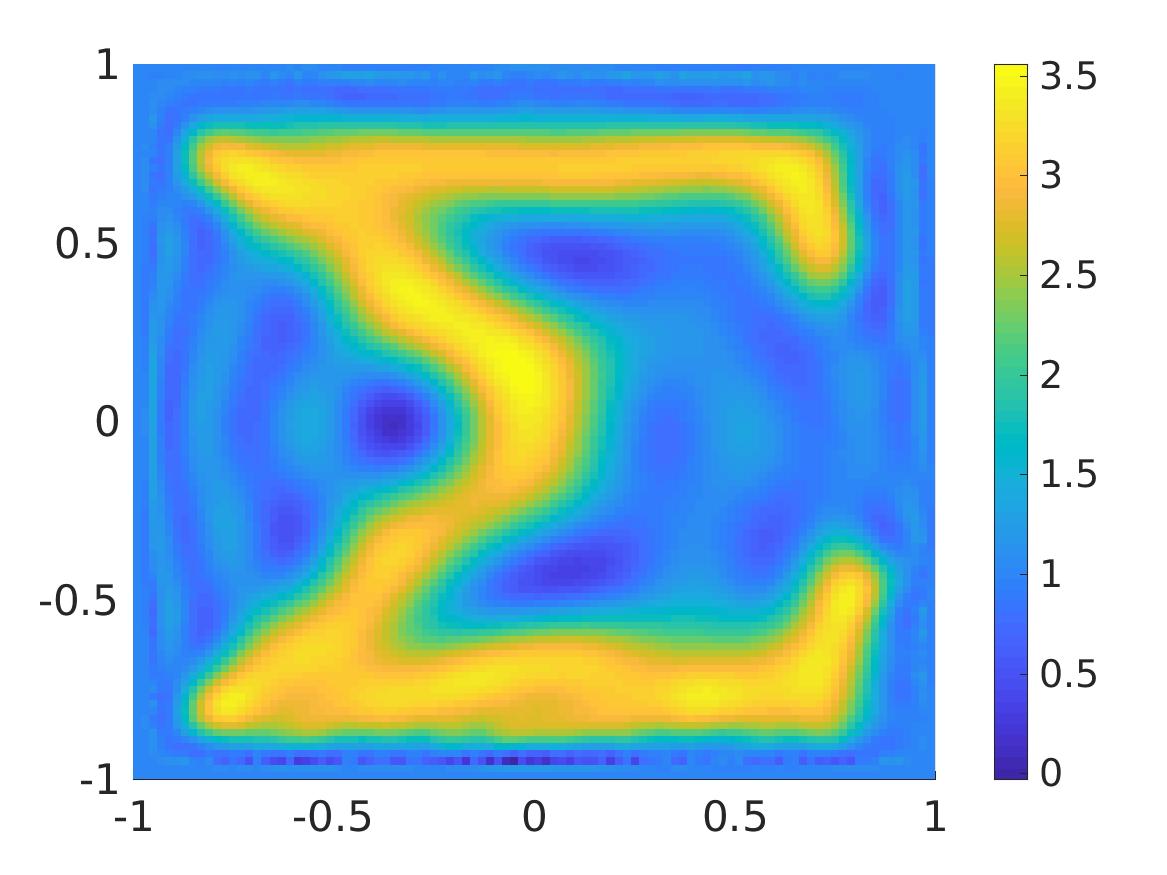}
		} \quad
		\subfloat[\label{error 5}The  relative difference $e_n$, $1 \leq n \leq 20$]{\includegraphics[width=.3\textwidth]{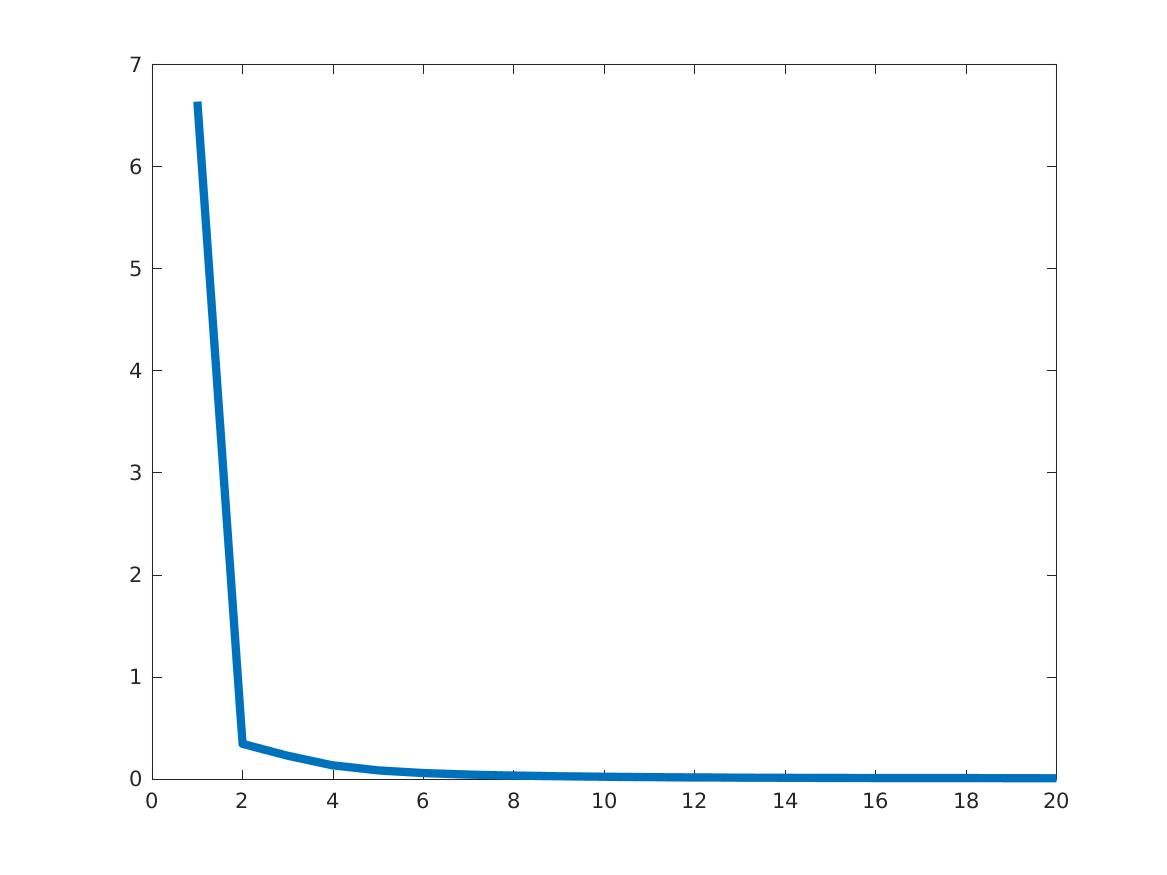} 
		}
		\caption{\label{fig Test 6} Test 5. Numerical solution to the coefficient inverse problem. The true and reconstructed coefficients $c$ and the recursive relative difference $e_n = \frac{\|c_n - c_{n - 1}\|_{L^{\infty}(\Omega)}}{\|c_{n - 1}\|_{L^{\infty}(\Omega)}}$, $1 \leq n \leq 20$.}
	\end{center}
\end{figure}
The reconstructed image ``$\Sigma$" meets the expectation although the initial guess $c_0 = 1$ is far away from the true function $c_{\rm true}$.  
The true maximal value of the function $c_{\rm true}$ is 3 and the reconstructed one is 3.56. The relative error is $18\%.$

\noindent {\bf Test 6.}
In test 6, the function $c_{\rm true}$ is given by
\[	
	c_{\rm true} = \left\{
		\begin{array}{ll}			
			5 & \max\{|x + 0.3|, 3|y| < 0.4\} \,\mbox{or}\, \max\{6|x - 0.5|, |y|\} < 0.8;\\		
			1 & \mbox{otherwise}.
		\end{array}
	\right.
\]
The image of the function $c_{\rm true}$ has a horizontal rectangle and a vertical rectangle. 
Due to the geometry and the high value, $c_{\rm true}$ is far away from the background $c_0 = 1$. 
We display the obtained numerical results in Figure \ref{fig Test 7}. 
Despite of the ``bad" initial guess $c_0$, the rectangles can be seen after a few iterations.
We note that the reconstructed images and value are improved with more iterations. The true maximum value of $c_{\rm true}$ is 5 and the reconstructed one is 5.94. The relative error is 18.8\%.

\begin{figure}[h!]
	\begin{center}
		\subfloat[The true coefficient $c_{\rm true}$]{\includegraphics[width=.3\textwidth]{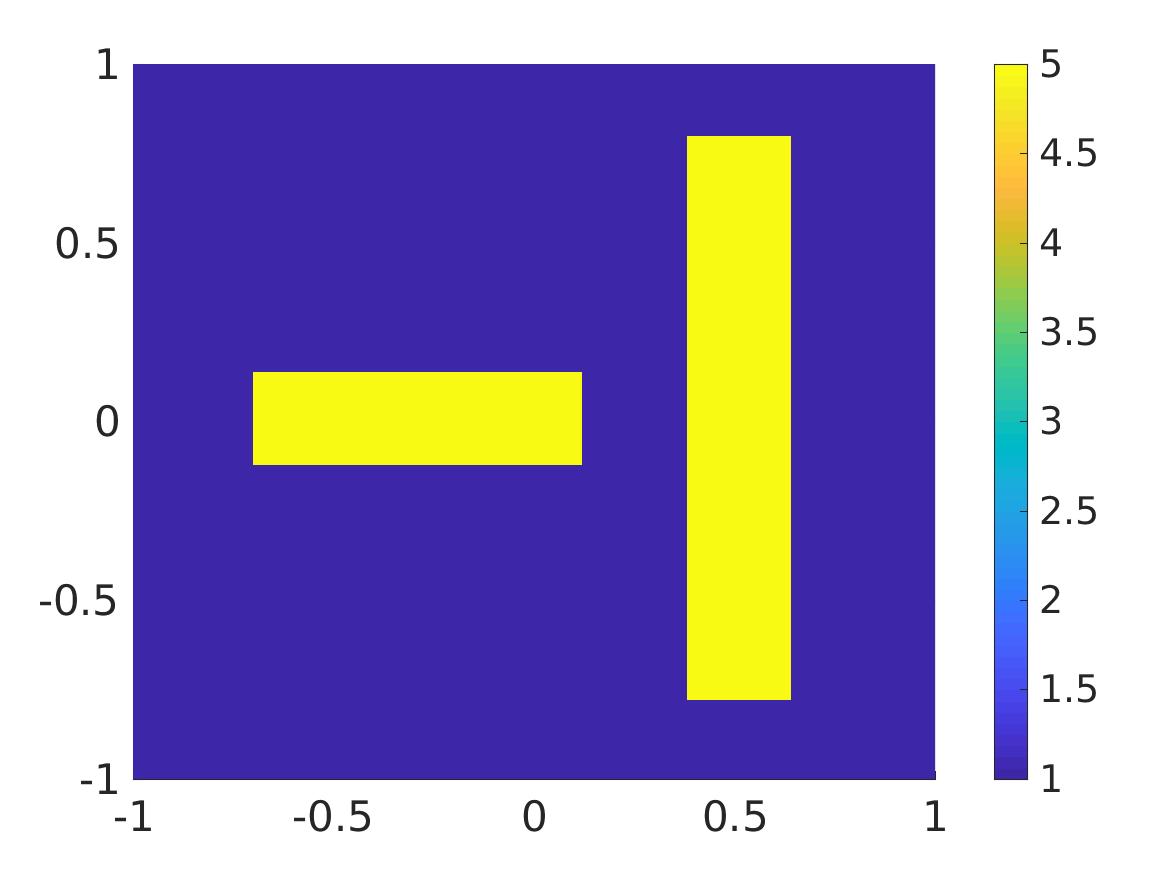}
		} \quad
		\subfloat[The function $c_1$]{\includegraphics[width=.3\textwidth]{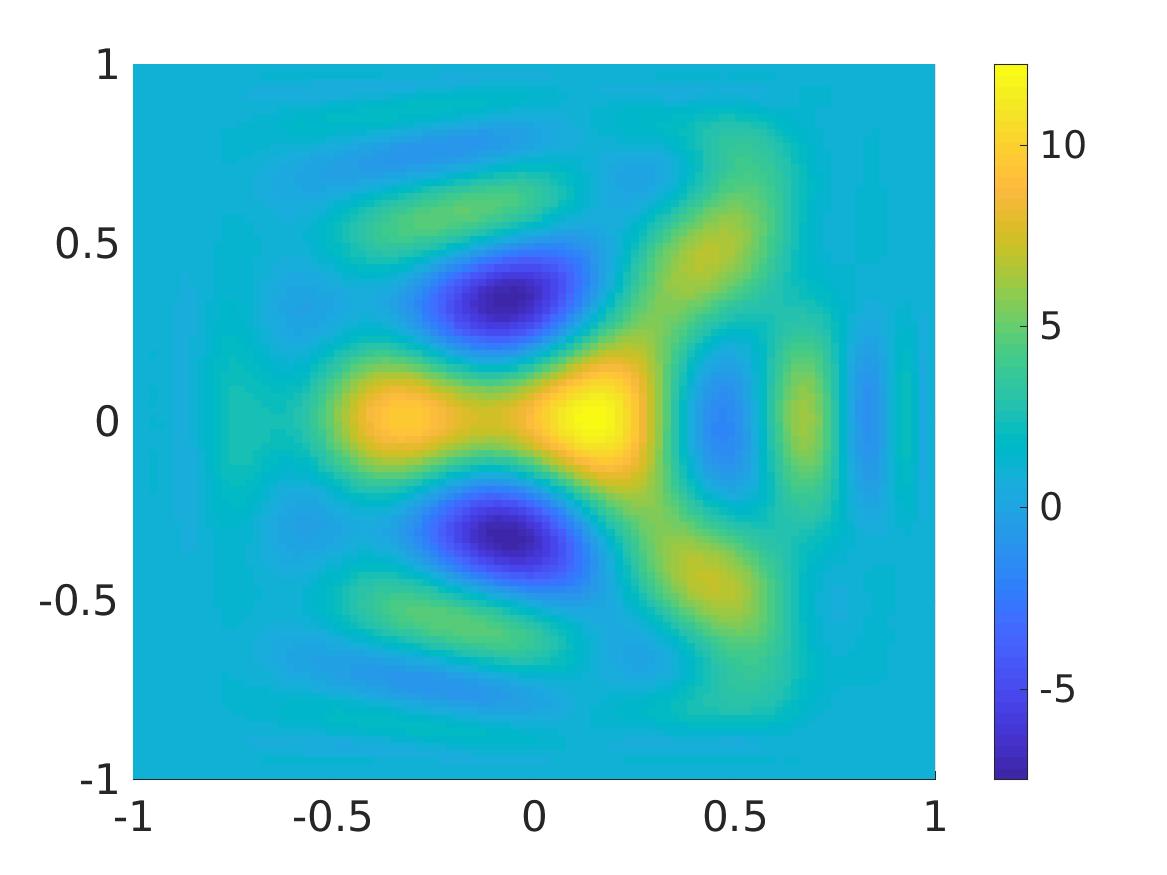}		
		} \quad
		\subfloat[The function $c_3$]{\includegraphics[width=.3\textwidth]{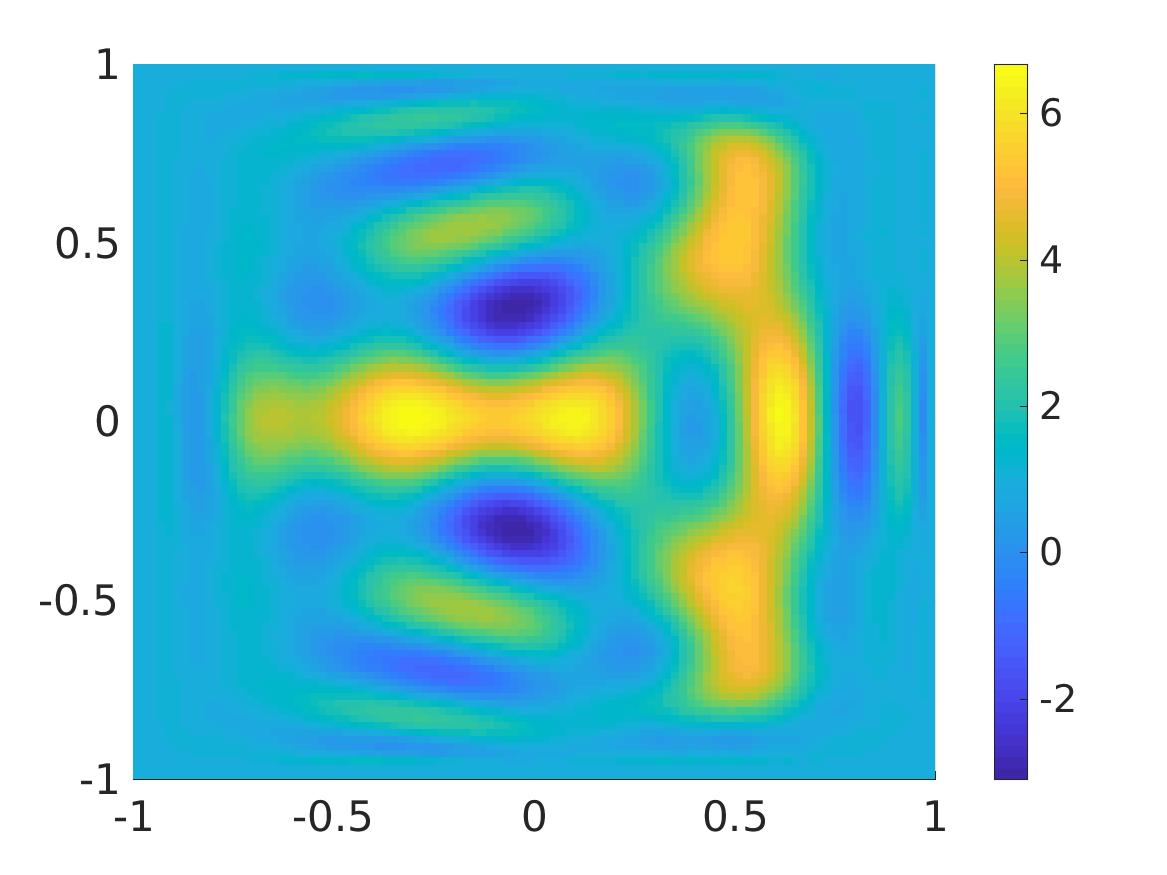}		
		}
		
		\subfloat[The function $c_{5}$]{\includegraphics[width=.3\textwidth]{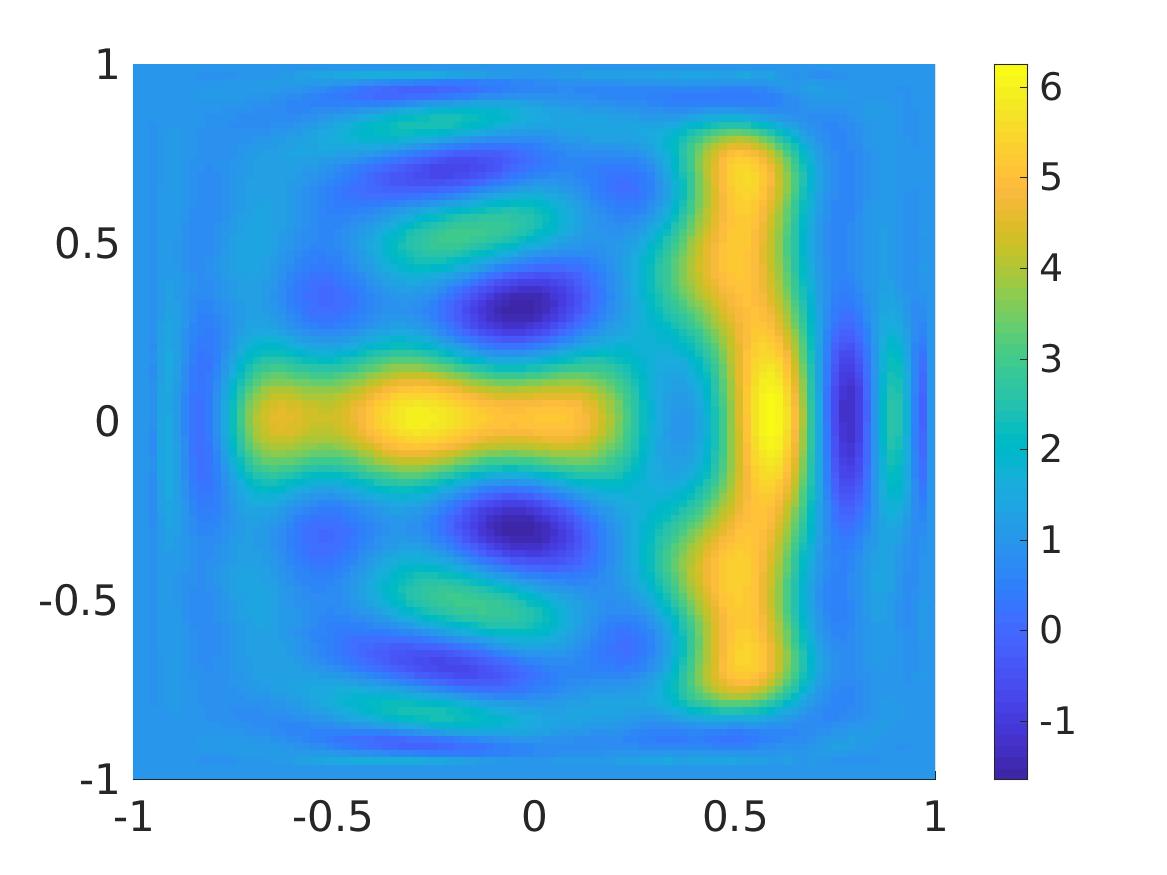}
		} \quad
		\subfloat[The function $c_{20}$]{\includegraphics[width=.3\textwidth]{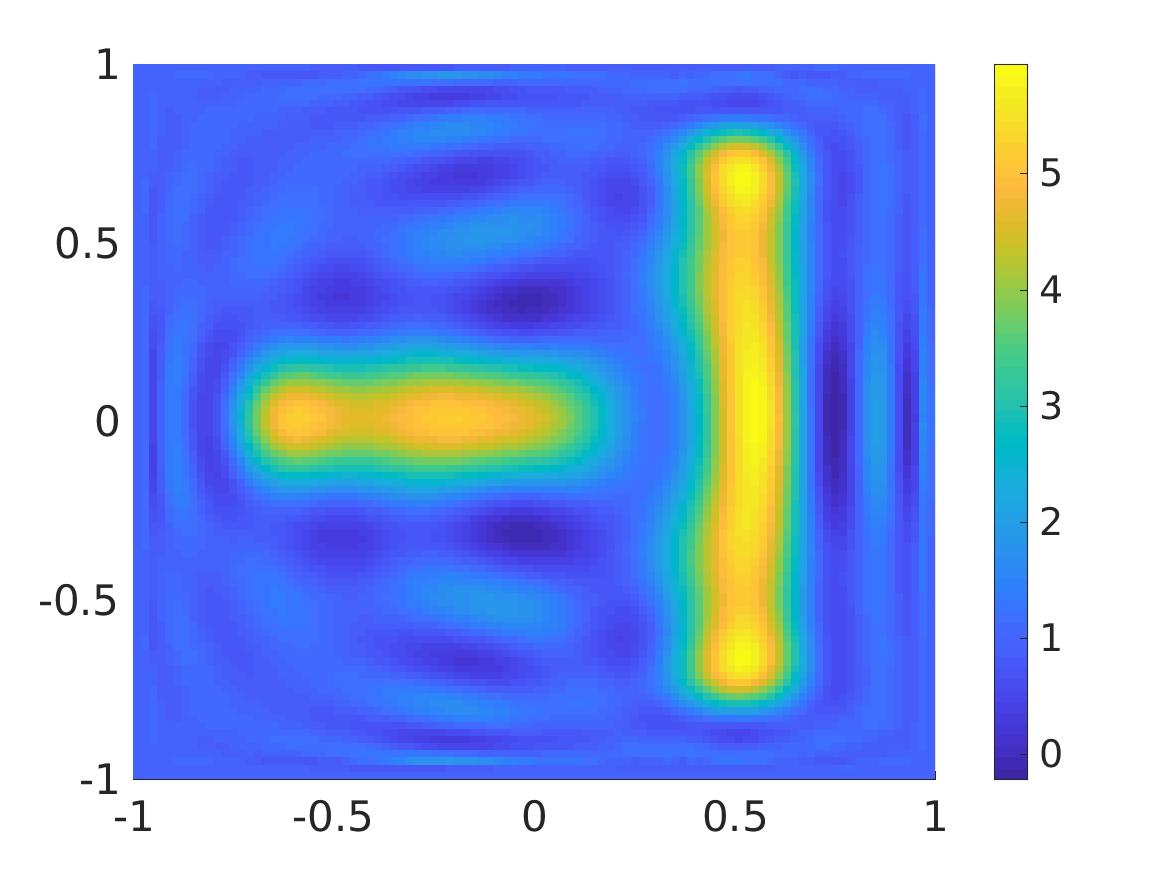}
		} \quad
		\subfloat[\label{error 7}The  relative difference $e_n$, $1 \leq n \leq 20$]{\includegraphics[width=.3\textwidth]{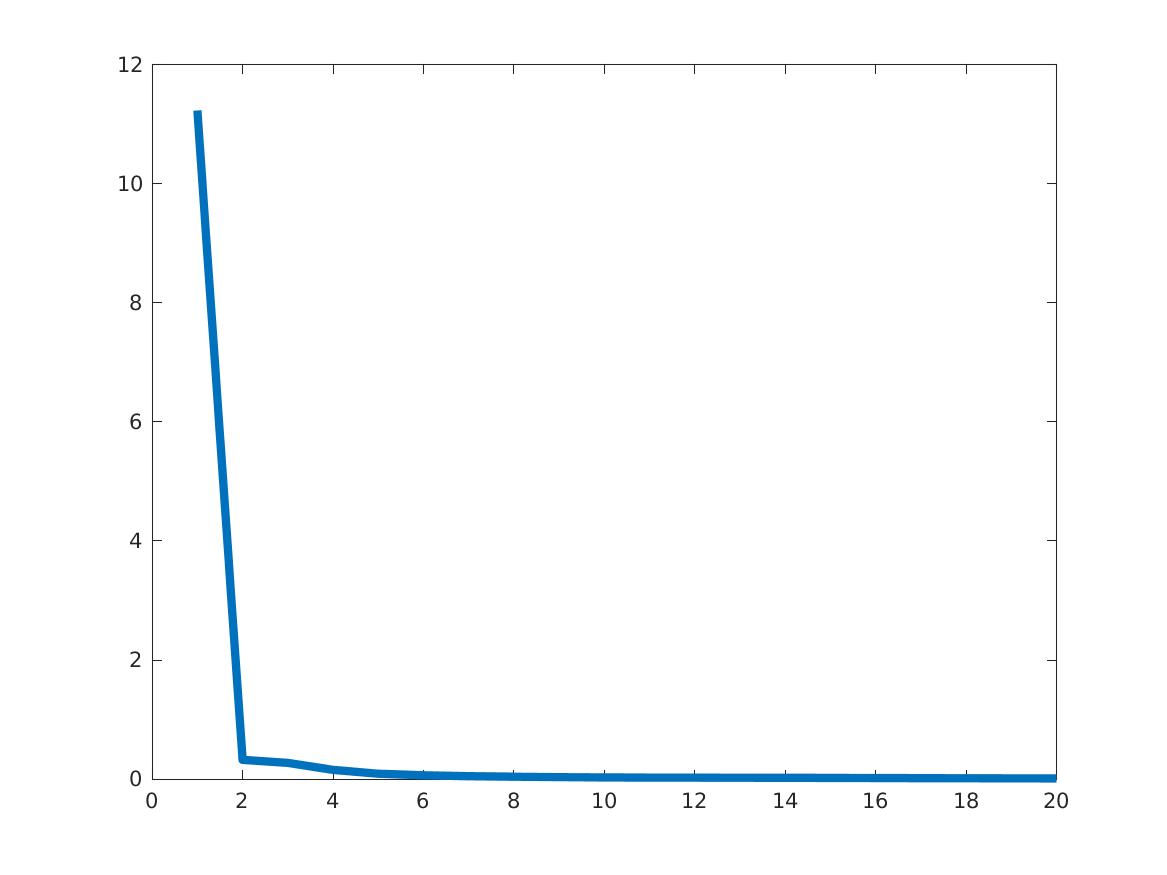} 
		}
		\caption{\label{fig Test 7}  Test 6. Numerical solution to the coefficient inverse problem. The true and reconstructed coefficients $c$ and the recursive relative difference $e_n = \frac{\|c_n - c_{n - 1}\|_{L^{\infty}(\Omega)}}{\|c_{n - 1}\|_{L^{\infty}(\Omega)}}$, $1 \leq n \leq 20$.}
	\end{center}
\end{figure}

\begin{remark}
	We observe that with the choice of $c_0$ as the background constant, the first reconstructed function $c_1$ is poor. 
	Then, in the next two iterations, the quality of the reconstructed function improves significantly.
 	Figures \ref{error 5} and \ref{error 7} show that the sequence $\{c_n\}_{n \geq 1}$ converges at the very fast rate.
\end{remark}

\section{Concluding remarks}\label{sec concluding}

In this paper, we have proposed a method to solve an inverse source problem for parabolic equations.
The stability of this problem is proved in an approximation context. 
To compute the numerical solutions to this inverse source problem, we derived an equation whose solution directly provides the desired solution of our inverse source problem.
However, this equation is not a standard parabolic equation. A theory to solve it is not yet available. 
We therefore employ the quasi-reversibility method to find its solution.
Since the inverse source problem in this paper is a linearization of a nonlinear coefficient inverse problem, we use the proposed method to establish an iterative method to solve that nonlinear coefficient inverse problem.
Numerical results were presented.

\section*{Acknowledgments}
The authors sincerely appreciate Michael V. Klibanov for many fruitful discussions.
The work of the second author was supported by US Army Research Laboratory and US Army Research Office grant W911NF-19-1-0044.

\providecommand{\bysame}{\leavevmode\hbox to3em{\hrulefill}\thinspace}
\providecommand{\MR}{\relax\ifhmode\unskip\space\fi MR }
\providecommand{\MRhref}[2]{%
  \href{http://www.ams.org/mathscinet-getitem?mr=#1}{#2}
}
\providecommand{\href}[2]{#2}


\begin{thebibliography}{10}

\bibitem{Anastasioelal:ip2007}
M.~A. Anastasio, J.~Zhang, D.~Modgil, and P.~J. La~Rivi\`ere.
\newblock Application of inverse source concepts to photoacoustic tomography.
\newblock {\em Inverse Problems}, 23:S21--S35, 2007.

\bibitem{AndrleBadia:ipse2015}
M.~Andrle and A.~El~Badia.
\newblock On an inverse source problem for the heat equation. {A}pplication to
  a pollution detection problem, {II}.
\newblock {\em Inverse Problems in Science and Engineering}, 23(3):389--412,
  2015.

\bibitem{Becacheelal:AIMS2015}
E.~B\'ecache, L.~Bourgeois, L.~Franceschini, and J.~Dard\'e.
\newblock Application of mixed formulations of quasi-reversibility to solve
  ill-posed problems for heat and wave equations: The 1d case.
\newblock {\em Inverse Problems \& Imaging}, 9(4):971--1002, 2015.

\bibitem{Bourgeois:ip2006}
L.~Bourgeois.
\newblock Convergence rates for the quasi-reversibility method to solve the
  {C}auchy problem for {L}aplace's equation.
\newblock {\em Inverse Problems}, 22:413--430, 2006.

\bibitem{BourgeoisDarde:ip2010}
L.~Bourgeois and J.~Dard\'e.
\newblock A duality-based method of quasi-reversibility to solve the {C}auchy
  problem in the presence of noisy data.
\newblock {\em Inverse Problems}, 26:095016, 2010.

\bibitem{BourgeoisPonomarevDarde:ipi2019}
L.~Bourgeois, D.~Ponomarev, and J.~Dard\'e.
\newblock An inverse obstacle problem for the wave equation in a finite time
  domain.
\newblock {\em Inverse Probl. Imaging}, 13(2):377--400, 2019.

\bibitem{ClasonKlibanov:sjsc2007}
C.~Clason and M.~V. Klibanov.
\newblock The quasi-reversibility method for thermoacoustic tomography in a
  heterogeneous medium.
\newblock {\em SIAM J. Sci. Comput.}, 30:1--23, 2007.

\bibitem{Dadre:ipi2016}
J.~Dard\'e.
\newblock Iterated quasi-reversibility method applied to elliptic and parabolic
  data completion problems.
\newblock {\em Inverse Problems and Imaging}, 10:379--407, 2016.

\bibitem{Devaney:itsu1983}
A.~J. Devaney.
\newblock Inverse source and scattering problems in ultrasonics.
\newblock {\em IEEE Transactions on Sonics and Ultrasonics}, 30:355--364, 1983.

\bibitem{BadiaDuong:jiip2002}
A.~El~Badia and T.~Ha-Duong.
\newblock On an inverse source problem for the heat equation. application to a
  pollution detection problem.
\newblock {\em Journal of Inverse and Ill-posed Problems}, 10:585--599, 2002.

\bibitem{HusseinLesnic:ejbe2014}
S.~O. Hussein and D.~Lesnic.
\newblock Determination of a space-dependent force function in the
  one-dimensional wave equation.
\newblock {\em Electronic Journal of Boundary Elements}, 12:1--26, 2014.

\bibitem{HusseinLesnic:jem2016}
S.~O. Hussein and D.~Lesnic.
\newblock Determination of forcing functions in the wave equation. {P}art {I}:
  the space-dependent case.
\newblock {\em J. Eng. Math.}, 96:115--133, 2016.

\bibitem{HusseinLesnic:jem2016t}
S.~O. Hussein and D.~Lesnic.
\newblock Determination of forcing functions in the wave equation. {P}art {II}:
  the time-dependent case.
\newblock {\em J. Eng. Math.}, 96:135--153, 2016.

\bibitem{KaltenbacherRundell:ipi2019}
B.~Kaltenbacher and W.~Rundell.
\newblock Regularization of a backwards parabolic equation by fractional
  operators.
\newblock {\em Inverse Probl. Imaging}, 13(2):401--430, 2019.

\bibitem{Klibanov:ip2006}
M.~V. Klibanov.
\newblock Estimates of initial conditions of parabolic equations and
  inequalities via lateral {C}auchy data.
\newblock {\em Inverse Problems}, 22:495--514, 2006.

\bibitem{Klibanov:jiipp2013}
M.~V. Klibanov.
\newblock Carleman estimates for global uniqueness, stability and numerical
  methods for coefficient inverse problems.
\newblock {\em J. Inverse and Ill-Posed Problems}, 21:477--560, 2013.

\bibitem{Klibanov:anm2015}
M.~V. Klibanov.
\newblock Carleman estimates for the regularization of ill-posed {C}auchy
  problems.
\newblock {\em Applied Numerical Mathematics}, 94:46--74, 2015.

\bibitem{KlibanovSantosa:SIAMJAM1991}
M.~V. Klibanov and F.~Santosa.
\newblock A computational quasi-reversibility method for {C}auchy problems for
  {L}aplace's equation.
\newblock {\em SIAM J. Appl. Math.}, 51:1653--1675, 1991.

\bibitem{klibanovYagola:arxiv2019}
M.~V. Klibanov and A.~G. Yagola.
\newblock Convergent numerical methods for parabolic equations with reversed
  time via a new {C}arleman estimate.
\newblock {\em preprint}, 2019.

\bibitem{LattesLions:e1969}
R.~Latt\`es and J.~L. Lions.
\newblock {\em The Method of Quasireversibility: Applications to Partial
  Differential Equations}.
\newblock Elsevier, New York, 1969.

\bibitem{LesnicHusseinJohabsson:jcam2016}
D.~Lesnic, S.~O. Hussein, and B.~T. Johansson.
\newblock Inverse space-dependent force problems for the wave equation.
\newblock {\em Journal of Computational and Applied Mathematics}, 306:10--39,
  2016.

\bibitem{LiYamamotoZou:cpaa2009}
J.~Li, M.~Yamamoto, and J.~Zou.
\newblock Conditional stability and numerical reconstruction of initial
  temperature.
\newblock {\em Communications on Pure and Applied Analysis}, 8:361--382, 2009.

\bibitem{Malyshev:jmaa1989}
I.~Malyshev.
\newblock An inverse source problem for heat equation.
\newblock {\em Journal of Mathematical Analysis and Applications},
  142:206--218, 1989.

\bibitem{LocNguyen:ip2019}
L.~H. Nguyen.
\newblock An inverse space-dependent source problem for hyperbolic equations
  and the {L}ipschitz-like convergence of the quasi-reversibility method.
\newblock {\em Inverse Problems}, 35:035007, 2019.

\bibitem{NguyenLiKlibanov:arxiv2019}
L.~H. Nguyen, Q.~Li, and M.~V. Klibanov.
\newblock A convergent numerical method for a multi-frequency inverse source
  problem in inhomogenous media.
\newblock {\em preprint, arXiv:1901.10047}, 2019.

\bibitem{Tihkonov:kapg1995}
A.~N. Tikhonov, A.~Goncharsky, V.~V. Stepanov, and A.~G. Yagola.
\newblock {\em Numerical Methods for the Solution of Ill-Posed Problems}.
\newblock Kluwer Academic Publishers Group, Dordrecht, 1995.

\end{thebibliography}


\begin{thebibliography}{10}

\bibitem{Anastasioelal:ip2007}
M.~A. Anastasio, J.~Zhang, D.~Modgil and P.~J. La~Rivi\`ere, Application of
  inverse source concepts to photoacoustic tomography, \emph{Inverse Problems}
  \textbf{23} (2007), S21--S35.

\bibitem{AndrleBadia:ipse2015}
M.~Andrle and A.~El~Badia, On an inverse source problem for the heat equation.
  {A}pplication to a pollution detection problem, {II}, \emph{Inverse Problems
  in Science and Engineering} \textbf{23} (2015), 389--412.

\bibitem{KlibanovNik:ra2017}
A.~B. Bakushinskii, M.~V. Klibanov and N.~A. Koshev, Carleman weight functions
  for a globally convergent numerical method for ill-posed {C}auchy problems
  for some quasilinear PDEs, \emph{Nonlinear Anal. Real World Appl.}
  \textbf{34} (2017), 201--224.

\bibitem{Becacheelal:AIMS2015}
E.~B\'ecache, L.~Bourgeois, L.~Franceschini and J.~Dard\'e, Application of
  mixed formulations of quasi-reversibility to solve ill-posed problems for
  heat and wave equations: The 1D case, \emph{Inverse Problems \& Imaging}
  \textbf{9} (2015), 971--1002.

\bibitem{Borceaetal:ip2014}
L.~Borcea, V.~Druskin, A.~V. Mamonov and M.~Zaslavsky, A model reduction
  approach to numerical inversion for a parabolic partial differential
  equation, \emph{Inverse Problems} \textbf{30} (2014), 125011.

\bibitem{Bourgeois:ip2006}
L.~Bourgeois, Convergence rates for the quasi-reversibility method to solve the
  {C}auchy problem for {L}aplace's equation, \emph{Inverse Problems}
  \textbf{22} (2006), 413--430.

\bibitem{BourgeoisDarde:ip2010}
L.~Bourgeois and J.~Dard\'e, A duality-based method of quasi-reversibility to
  solve the {C}auchy problem in the presence of noisy data, \emph{Inverse
  Problems} \textbf{26} (2010), 095016.

\bibitem{BourgeoisPonomarevDarde:ipi2019}
L.~Bourgeois, D.~Ponomarev and J.~Dard\'e, An inverse obstacle problem for the
  wave equation in a finite time domain, \emph{Inverse Probl. Imaging}
  \textbf{13} (2019), 377--400.

\bibitem{CaoLesnic:nmpde2018}
K.~Cao and D.~Lesnic, Determination of space-dependent coefficients from
  temperature measurements using the conjugate gradient method, \emph{Numer
  Methods Partial Differential Eq.} \textbf{34} (2018), 1370--1400.

\bibitem{CaoLesnic:amm2019}
K.~Cao and D.~Lesnic, Simultaneous reconstruction of the perfusion coefficient
  and initial temperature from time-average integral temperature measurements,
  \emph{Applied Mathematical Modelling} \textbf{68} (2019), 523--539.

\bibitem{ClasonKlibanov:sjsc2007}
C.~Clason and M.~V. Klibanov, The quasi-reversibility method for thermoacoustic
  tomography in a heterogeneous medium, \emph{SIAM J. Sci. Comput.} \textbf{30}
  (2007), 1--23.

\bibitem{Dadre:ipi2016}
J.~Dard\'e, Iterated quasi-reversibility method applied to elliptic and
  parabolic data completion problems, \emph{Inverse Problems and Imaging}
  \textbf{10} (2016), 379--407.

\bibitem{Devaney:itsu1983}
A.~J. Devaney, Inverse source and scattering problems in Ultrasonics,
  \emph{IEEE Transactions on Sonics and Ultrasonics} \textbf{30} (1983),
  355--364.

\bibitem{BadiaDuong:jiip2002}
A.~El~Badia and T.~Ha-Duong, On an inverse source problem for the heat
  equation. Application to a pollution detection problem, \emph{Journal of
  Inverse and Ill-posed Problems} \textbf{10} (2002), 585--599.

\bibitem{HusseinLesnic:ejbe2014}
S.~O. Hussein and D.~Lesnic, Determination of a space-dependent force function
  in the one-dimensional wave equation, \emph{Electronic Journal of Boundary
  Elements} \textbf{12} (2014), 1--26.

\bibitem{HusseinLesnic:jem2016}
S.~O. Hussein and D.~Lesnic, Determination of forcing functions in the wave
  equation. {P}art {I}: the space-dependent case, \emph{J. Eng. Math.}
  \textbf{96} (2016), 115--133.

\bibitem{HusseinLesnic:jem2016t}
S.~O. Hussein and D.~Lesnic, Determination of forcing functions in the wave
  equation. {P}art {II}: the time-dependent case, \emph{J. Eng. Math.}
  \textbf{96} (2016), 135--153.

\bibitem{KaltenbacherRundell:ipi2019}
B.~Kaltenbacher and W.~Rundell, Regularization of a backwards parabolic
  equation by fractional operators, \emph{Inverse Probl. Imaging} \textbf{13}
  (2019), 401--430.

\bibitem{KeungZou:ip1998}
Y.~L. Keung and J.~Zou, Numerical identifications of parameters in parabolic
  systems, \emph{Inverse Problems} \textbf{14} (1998), 83--100.

\bibitem{Klibanov:ip2006}
M.~V. Klibanov, Estimates of initial conditions of parabolic equations and
  inequalities via lateral {C}auchy data, \emph{Inverse Problems} \textbf{22}
  (2006), 495--514.

\bibitem{Klibanov:jiipp2013}
M.~V. Klibanov, Carleman estimates for global uniqueness, stability and
  numerical methods for coefficient inverse problems, \emph{J. Inverse and
  Ill-Posed Problems} \textbf{21} (2013), 477--560.

\bibitem{Klibanov:anm2015}
M.~V. Klibanov, Carleman estimates for the regularization of ill-posed {C}auchy
  problems, \emph{Applied Numerical Mathematics} \textbf{94} (2015), 46--74.

\bibitem{Klibanov:ip2015}
M.~V. Klibanov, Carleman weight functions for solving ill-posed {C}auchy
  problems for quasilinear {PDE}s, \emph{Inverse Problems} \textbf{31} (2015),
  125007.

\bibitem{Klibanov:jiip2017}
M.~V. Klibanov, Convexification of restricted {D}irichlet to {N}eumann map,
  \emph{J. Inverse and Ill-Posed Problems} \textbf{25} (2017), 669--685.

\bibitem{KlibanovNguyen:ip2019}
M.~V. Klibanov and L.~H. Nguyen, {PDE}-based numerical method for a limited
  angle {X}-ray tomography, \emph{Inverse Problems} \textbf{35} (2019), 045009.

\bibitem{KlibanovSantosa:SIAMJAM1991}
M.~V. Klibanov and F.~Santosa, A computational quasi-reversibility method for
  {C}auchy problems for {L}aplace's equation, \emph{SIAM J. Appl. Math.}
  \textbf{51} (1991), 1653--1675.

\bibitem{klibanovYagola:arxiv2019}
M.~V. Klibanov and A.~G. Yagola, Convergent numerical methods for parabolic
  equations with reversed time via a new {C}arleman estimate, \emph{preprint}
  (2019).

\bibitem{LattesLions:e1969}
R.~Latt\`es and J.~L. Lions, \emph{The Method of Quasireversibility:
  Applications to Partial Differential Equations}, Elsevier, New York, 1969.

\bibitem{LesnicHusseinJohabsson:jcam2016}
D.~Lesnic, S.~O. Hussein and B.~T. Johansson, Inverse space-dependent force
  problems for the wave equation, \emph{Journal of Computational and Applied
  Mathematics} \textbf{306} (2016), 10--39.

\bibitem{LiYamamotoZou:cpaa2009}
J.~Li, M.~Yamamoto and J.~Zou, Conditional Stability and Numerical
  Reconstruction of Initial Temperature, \emph{Communications on Pure and
  Applied Analysis} \textbf{8} (2009), 361--382.

\bibitem{LiNguyen:arxiv2019}
Q.~Li and L.~H. Nguyen, Recovering the initial condition of parabolic equations
  from lateral {C}auchy data via the quasi-reversibility method,,
  \emph{preprint, arXiv:1902.07637} (2019).

\bibitem{Malyshev:jmaa1989}
I.~Malyshev, An inverse source problem for heat equation, \emph{Journal of
  Mathematical Analysis and Applications} \textbf{142} (1989), 206--218.

\bibitem{TuanKhoaAu:SIAM2019}
H.~T. Nguyen, V.~A. Khoa and V.~A. Vo, Analysis of a quasi-reversibility method
  for a terminal value quasi-linear parabolic problem with measurements,
  \emph{SIAM Journal on Mathematical Analysis} \textbf{51} (2019), 60--85.

\bibitem{LocNguyen:ip2019}
L.~H. Nguyen, An inverse space-dependent source problem for hyperbolic
  equations and the {L}ipschitz-like convergence of the quasi-reversibility
  method, \emph{Inverse Problems} \textbf{35} (2019), 035007.

\bibitem{NguyenLiKlibanov:IPI2019}
L.~H. Nguyen, Q.~Li and M.~V. Klibanov, A convergent numerical method for a
  multi-frequency inverse source problem in inhomogenous media, \emph{to appear
  on Inverse Problems and Imaging, preprint, arXiv:1901.10047} (2019).

\bibitem{Prilepko:pam2000}
A.~I. Prilepko, D.~G. Orlovsky and I.~A. Vasin, \emph{Methods for solving
  inverse problems in mathematical physics},  321, Pure and Applied
  Mathematics, Marcel Dekker, New Youk, 2000.

\bibitem{Tihkonov:kapg1995}
A.~N. Tikhonov, A.~Goncharsky, V.~V. Stepanov and A.~G. Yagola, \emph{Numerical
  Methods for the Solution of Ill-Posed Problems}, Kluwer Academic Publishers
  Group, Dordrecht, 1995.

\bibitem{Tuan:ip2017}
N.~H. Tuan, V.~V. Au, V.~A. Khoa and D.~Lesnic, Identification of the
  population density of a species model with nonlocal diffusion and nonlinear
  reaction, \emph{Inverse Problems} \textbf{33} (2017), 055019.

\bibitem{YangYuDeng:amm2008}
L.~Yang, J-N. Yu and Y-C. Deng, An inverse problem of identifying the
  coefficient of parabolic equation, \emph{Applied Mathematical Modelling}
  \textbf{32} (2008), 1984--1995.

\end{thebibliography}

\end{document}